\newtheorem{lemma}{Lemma}
\newtheorem{theorem}{Theorem}
\newtheorem{proposition}{Proposition}
\newcommand{\beq}{\begin{equation}}
\newcommand{\eeq}{\end{equation}}
\newcommand{\barr}{\begin{array}}
\newcommand{\earr}{\end{array}}
\newcommand{\bea}{\begin{eqnarray}}
\newcommand{\eea}{\end{eqnarray}}
\newcommand{\innerproduct}[2]{\langle #1, #2 \rangle}
\newcommand{\pd}[2]{\frac{\partial #1}{\partial #2}}
\newcommand{\pdt}{\frac{\partial}{\partial t}}
\newcommand{\mtwo}[4]{\begin{pmatrix} #1 & #2 \\ #3 & #4 \end{pmatrix}}
\newcommand{\vtwo}[2]{\begin{pmatrix} #1 \\ #2 \end{pmatrix}}
\newcommand{\pmat}[1]{\begin{pmatrix} #1 \end{pmatrix}}
\newcommand{\eps}{\epsilon}
\begin{document}

\title{Energy-Superconvergent Explicit Runge--Kutta Time Discretizations%\thanks{Grants or other notes
%about the article that should go on the front page should be
%placed here. General acknowledgments should be placed at the end of the article.}
}
%\subtitle{Do you have a subtitle?\\ If so, write it here}

%\titlerunning{Short form of title}        % if too long for running head

\author{Jinjie Liu \footnote{Division of Physics, Engineering, Mathematics, and Computer Science, Delaware State University, Dover, DE 19901 USA. jliu@desu.edu.} 
	~~~and~~~
	Moysey Brio \footnote{Department of Mathematics, The University of Arizona, Tucson, AZ 85721 USA. brio@math.arizona.edu.}
}

%\date{Received: date / Accepted: date}
% The correct dates will be entered by the editor

\maketitle

\begin{abstract}
This paper investigates the energy conservation properties of explicit Runge--Kutta (RK) time discretizations for autonomous skew-symmetric  systems. For linear problems, we present a general framework for constructing RK methods in which the energy-accuracy order significantly exceeds the number of stages. Specifically, for an $s$-stage, $p$-th order RK method (where $p$ is even), we prove that the energy accuracy can reach up to order $2s-p+1$. Utilizing this framework, we derive several energy-superconvergent methods, 
including five- to seven-stage algorithms with energy accuracy up to the eleventh order, 
and establish their corresponding strong stability criteria. 
The methods are validated on a range of benchmark problems, 
including harmonic oscillators, integro-differential equations in peridynamics, and the Maxwell equations.
	
Furthermore, we extend the energy-superconvergent framework to autonomous nonlinear systems with amplitude-dependent frequencies. 
By deriving fifth-order energy conditions for three-stage, second-order methods, we develop the RK325 algorithm. 
The performance of RK325 is demonstrated for a broad range of problems, including Euler's equations for rigid body dynamics, the nonlinear Schr\"odinger equation, the Korteweg--de Vries (KdV) equation, Burgers' equation, and the Landau--Lifshitz equation. 
Additionally, we develop four-stage, second-order methods (RK427) and five-stage, fourth-order methods (RK547), all of which achieve seventh-order energy accuracy for the cubic nonlinear case.
Finally, the performance of RK547 method is illustrated using the nonlinear Maxwell--Kerr system.

\medskip
{\bf Keywords}:
{Runge--Kutta methods; energy-superconvergence; skew-symmetric systems; nonlinear Maxwell--Kerr system; peridynamics}

\medskip
{\bf MSC:}
{65L06; 65M12; 35Q61; 74A70}
\end{abstract}

\section{Introduction}
\label{intro}

Runge--Kutta (RK) algorithms are widely used numerical methods for solving systems of ordinary differential equations (ODEs) and ODE systems obtained from semi-discretized partial differential equations (PDEs). 
Besides stability and solution convergence rates, energy accuracy is an important criterion for selecting RK methods, especially for energy conserving systems. Generally, only implicit RK methods can preserve the energy \cite{cooper1987stability,sanz1988runge}. 
However, implicit methods require the solution of a large system of equations at every iteration. 
In contrast, explicit RK methods are computationally efficient, but they are generally not energy-preserving. 
Nevertheless, energy-based analysis has been widely used to study stability properties and to construct strong stability preserving (SSP) methods  \cite{levy1998semidiscrete,gottlieb1998total,gottlieb2001strong,tadmor2002semidiscrete,sun2017stability,sun2019strong,cockburn2001runge}.

Energy-conserving time integrators remain a very active area of research, 
and recent developments include 
the averaged vector field (AVF) method \cite{quispel2008new,celledoni2010energy,hairer2010energy}, 
the relaxation RK methods \cite{ketcheson2019relaxation,ranocha2020relaxation},	
the scalar auxiliary variable (SAV) approach \cite{shen2018scalar,antoine2021scalar}, 
the energy-conserving SAV discontinuous Galerkin method for the nonlinear Dirac equation \cite{yang2022energy}, 
the successive multi-stage method for the linear wave equation \cite{shin2022energy,shin2023energy}, 
and the energy-preserving relaxed implicit-explicit (IMEX) RK method \cite{cui2025high}.

In this paper, we investigate the energy accuracy of explicit RK methods for skew-symmetric autonomous systems. 
Typical applications include Hamiltonian systems, semi-discretized Maxwell's equations, 
nonlinear Schr\"odinger equations, the Korteweg--de Vries (KdV) equation, Burgers' equation, Landau--Lifshitz equations, 
and peridynamic nonlocal PDEs \cite{Silling2000}.

The energy accuracy of linear autonomous systems has been studied in \cite{sun2019strong}, where it was shown that the energy order is at least $p+1$ for a method of even order $p$. 
For an $s$-stage method, the discrete energy equation is a polynomial of degree $2s$ in the time step $h$. Hence, the highest possible energy accuracy is of order $2s-1$, provided that all lower-order terms vanish. In general, imposing these conditions leads to an overdetermined system for the RK coefficients. 

For skew-symmetric systems, however, all odd-order terms vanish automatically, thereby reducing the constraints and making the resulting system solvable.
Solving the resulting system for the RK coefficients yields a class of energy-superconvergent RK methods in which the order of energy accuracy exceeds the number of stages.
For an $s$-stage method of even order $p$, the energy accuracy can reach the order of $2s-p+1$. For example, we construct a seven-stage fourth-order method with an energy order of 11. In addition, we derive a set of strongly stable fourth-order RK methods and their corresponding stability criteria. 

Note that SSP-RK methods \cite{gottlieb2001strong} have been developed for general linear and nonlinear systems. 
In contrast, the proposed methods are specifically designed for skew-symmetric systems, 
with an emphasis on achieving higher-order energy accuracy.
The proposed energy-superconvergent methods are tested using several one-dimensional examples, including second-order ODEs for harmonic oscillators, integro-differential equations for linear peridynamic models, and Maxwell's equations of electrodynamics. 

Furthermore, we extend the energy-superconvergent framework to autonomous nonlinear systems with amplitude-dependent frequencies.
First, we derive the order conditions for a three-stage, second-order RK method to achieve fifth-order energy accuracy. 
The resulting three-stage RK solver is applied to several nonlinear problems,
including Euler's equations for rigid body dynamics, the nonlinear Schr\"odinger equation, the KdV equation, Burgers' equation, and the Landau--Lifshitz equation. 
Second, we develop four- and five-stage methods achieving seventh-order energy accuracy for the cubic nonlinear case. 
Finally, the five-stage method is employed to solve the nonlinear Maxwell--Kerr system that models the electromagnetic wave propagation in third-order nonlinear media.

The paper is organized as follows. In Section \ref{sec:ESC}, we present the energy accuracy analysis that leads to the proposed energy-superconvergent methods. 
The extension to the nonlinear case is discussed in Section \ref{sec:nonlinear}.
Linear and nonlinear examples are presented in Sections \ref{sec:examples} and \ref{sec:nonlinear_examples}, respectively.
Conclusion and future work are discussed in Section \ref{sec:conc}.

\section{Energy-superconvergent time discretization}
\label{sec:ESC}

We consider the autonomous linear ordinary differential equation (ODE) system:
\beq
\frac{d}{dt} u = L u, \label{eq:de}
\eeq
where $L$ is skew-adjoint with respect to a symmetric and positive definite matrix $H$, i.e., $L^\top H + HL = 0$.  
In particular, when $H$ is an identity matrix, $L$ is skew-symmetric. 
The total energy (Hamiltonian) of this system is given by: 
\beq
\mathcal{E} = \frac{1}{2} ||u||^2_H = \frac{1}{2} \innerproduct{u}{u}_H. %\langle u,u \rangle_H.
\eeq
Here, $\innerproduct{\cdot}{\cdot}_H$ represents the $H$-inner product, defined by $\innerproduct{x}{y}_H=\innerproduct{x}{Hy}$ where $\innerproduct{\cdot}{\cdot}$ is the Euclidean inner product. 
$||\cdot||_H$ is the corresponding $H$-norm. 
Throughout the rest of the paper, we drop the subscript $H$ and simply write $||\cdot||=||\cdot||_H$.

A general $s$-stage RK time discretization for the system (\ref{eq:de}) can be written as: 
\beq
u_{n+1} = G_s u_n, \label{eq:RK}
\eeq
where 
\beq 
G_s = \sum_{k=0}^s {a_k (hL)^k},~ a_0 = 1, ~ a_s\ne 0. \label{eq:G}
\eeq
Here, $h$ is the time step and $\{a_k\}_{k=0}^s$ are method dependent coefficients.
The method is of order $p$ if the first $p+1$ terms in $G_s$ coincide with the $p$-th Taylor polynomial of $e^{hL}$. 
 
Since $L^\top H + HL = 0$, we have (\cite{sun2019strong}, Corollary 2.2)
\beq
\innerproduct{L^iu}{L^ju}_H=\left\{
	\begin{array}{ll}
	(-1)^{(j-i)/2}||L^{(i+j)/2} u||^2, & i+j~\text{is even}, \\
	0, 							 & i+j~\text{is odd}, 
	\end{array} \right.
\eeq
and the energy equation
\beq 
\mathcal{E}_{n+1} = \mathcal{E}_n + \frac{1}{2} \sum_{k=1}^{s} b_k h^{2k} ||L^ku_n||^2, \label{eq:energy}
\eeq
where 
\beq  \label{eq:bk}
b_k = \sum_{i=\text{max}(0,2k-s)}^{\text{min}(2k,s)} (-1)^{k+i} a_i a_{2k-i} 
	= a_k^2 + 2 \sum_{i=1}^{\text{min}(k,s-k)}(-1)^ia_{k-i}a_{k+i}, 
\eeq
and $\mathcal{E}_n$ and $\mathcal{E}_{n+1}$ represent the energy at two consecutive times. 
%Similar to the definition given in \cite{sun2019strong}, 
Let $m$ denote the leading index of Equation (\ref{eq:energy}), i.e., $b_{m} \ne 0$ and $b_{k} = 0$ for all $1\le k < m$. 
Then the leading coefficient is $b_m$ and the order of energy accuracy is $r = 2m-1$. 
In this paper, we use $s$, $p$, and $r$ to represent the number of stages, the order of the solution accuracy, and the order of energy accuracy, respectively. 
An $s$-stage, $p$-th order method with $r$-th order of energy accuracy is denoted by RK($s$,$p$,$r$).

Proposition 4.7 in \cite{sun2019strong} reveals that $r=p$ if $p$ is odd, and $r \ge p+1$ if $p$ is even. 
When $p$ is even and $p=s$, using the binomial theorem 
\beq
\sum_{i=0}^{k}  \frac{(-1)^{i}}{i! (k-i)!} = 0, 
\eeq
we can calculate the leading coefficient as follows: 
\bea
b_{s/2+1} %&=& \sum_{i=2}^{s} (-1)^{s/2+1+i} a_i a_{s+2-i}, \\
		    &=& (-1)^{s/2+1} \sum_{i=2}^{s} (-1)^i \cdot \frac{1}{i!} \cdot \frac{1}{(s+2-i)!}, \\
		    &=& (-1)^{s/2+1}\left( \sum_{i=0}^{s+2}  \frac{(-1)^i}{i! (s+2-i)!} - \frac{-2}{(s+1)!} - \frac{2}{(s+2)!} \right), \\
		    %&=& 0 + (-1)^{s/2+1}\frac{2}{(s+1)!} - (-1)^{s/2+1} \frac{2}{(s+2)!}, \\		    
		    &=& (-1)^{s/2+1} \left( \frac{2}{(s+1)!}-\frac{2}{(s+2)!} \right) \ne 0, 
\eea
so $r = s+1$. 
This result is summarized in the following proposition.
\begin{proposition} \label{prop:p=s}
For an $s$-stage Runge--Kutta method of order $p=s$ applied to system (\ref{eq:de}), the order of energy accuracy is $r = 2\lfloor s/2 \rfloor + 1$, i.e., $r=s$ if $s$ is odd and $r=s+1$ if $s$ is even.
\end{proposition}

For instance, if $p=s=4$, then $a_1=1$, $a_2=1/2$, $a_3=1/6$, $a_4=1/24$, $b_1=b_2=0$, $b_3 = -1/72$, and $b_4 = 1/576$. 
Therefore, 
\beq
\mathcal{E} _{n+1} =\mathcal{E} _n - \frac{1}{72} h^6 ||L^3 u_n||^2 + \frac{1}{576} h^8 ||L^4 u_n||^2 = \mathcal{E} _n + \mathcal{O}(h^6), 
\eeq
and therefore $r=5$. In \cite{sun2017stability}, Corollary 2.1 proves that this four-stage fourth-order RK method is strongly stable if $h||L||\le 2\sqrt{2}$, where $||L||$ is the matrix norm: $||L||=\text{sup}_{||v||=1}||Lv||$. A more general result is given in the following lemma.

\begin{lemma} \label{lemma1}
An $s$-stage RK method applied to system (\ref{eq:de}), with a leading index $m=s-1$, is strongly stable if: \\
%(i) $m=s-1$,  \\
(i) $b_{s-1} < 0$ and \\
(ii) 
\beq \label{eq:stability} 
h||L|| \le \sqrt{\frac{|b_{s-1}|}{b_s}}.
\eeq
\end{lemma}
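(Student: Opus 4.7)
The plan is to start from the energy identity (\ref{eq:energy}) and show that the per-step energy increment is nonpositive under the three hypotheses; strong stability in the $H$-norm then follows since $\mathcal{E}_n = \tfrac12\|u_n\|^2$.

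First I would apply hypothesis (i), which kills the terms $b_1 h^2\|Lu_n\|^2,\ldots,b_{s-2}h^{2(s-2)}\|L^{s-2}u_n\|^2$ in (\ref{eq:energy}), so that
\beq
\mathcal{E}_{n+1}-\mathcal{E}_n \;=\; \tfrac{1}{2}\,h^{2(s-1)}\Bigl(b_{s-1}\|L^{s-1}u_n\|^2 \;+\; b_s\,h^2\,\|L^s u_n\|^2\Bigr).
\eeq
Since $b_{s-1}<0$ and the radicand $-b_{s-1}/b_s$ in (\ref{eq:stability}) must be nonnegative, hypothesis (iii) tacitly forces $b_s>0$; I would point this out explicitly so that the sign of the second term is clear.

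Next I would use the submultiplicativity of the operator norm in the form
\beq
\|L^s u_n\| \;=\; \|L(L^{s-1}u_n)\| \;\le\; \|L\|\cdot\|L^{s-1}u_n\|,
\eeq
which is valid for the $H$-norm since $\|\cdot\|$ here denotes the norm induced by $H$ on both vectors and operators. Substituting this into the bracket above gives
\beq
\mathcal{E}_{n+1}-\mathcal{E}_n \;\le\; \tfrac{1}{2}\,h^{2(s-1)}\bigl(b_{s-1}+b_s\,h^2\|L\|^2\bigr)\,\|L^{s-1}u_n\|^2,
\eeq
where the inequality uses $b_s>0$.

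Finally, hypothesis (iii) rewritten as $b_s h^2\|L\|^2 \le -b_{s-1}$ makes the parenthesized factor nonpositive, so $\mathcal{E}_{n+1}\le\mathcal{E}_n$, which is exactly $\|u_{n+1}\|_H\le\|u_n\|_H$, i.e., strong stability. The argument is essentially mechanical once (\ref{eq:energy}) is in hand; the only subtle point worth flagging is the tacit requirement $b_s>0$ and the use of the $H$-induced operator norm when bounding $\|L^s u_n\|$ by $\|L\|\,\|L^{s-1}u_n\|$, since the whole energy analysis lives in the $H$-inner product rather than the Euclidean one.
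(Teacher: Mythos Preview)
Your proof is correct and follows essentially the same route as the paper: reduce the energy identity via (i), bound $\|L^s u_n\|\le\|L\|\,\|L^{s-1}u_n\|$, and then use (iii) to make the bracketed factor nonpositive. The only minor difference is that the paper obtains $b_s>0$ directly from the structural fact $b_s=a_s^2$ with $a_s\ne0$, whereas you deduce it from the well-definedness of the radicand in (\ref{eq:stability}); both are fine.
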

\begin{proof}
Because of condition (i), the energy equation (\ref{eq:energy}) becomes: 
\beq 
\mathcal{E}_{n+1} = \mathcal{E}_n + \frac{1}{2} b_{s-1} h^{2s-2} ||L^{s-1} u_n||^2 +  \frac{1}{2} b_{s} h^{2s} ||L^s u_n||^2,
\eeq
where $b_s=a_s^2>0$. Condition (ii) and (iii) imply that: 
\beq
b_s h^2||L||^2 \le - b_{s-1},  
\eeq
and thus 
\beq
\mathcal{E}_{n+1} - \mathcal{E}_n \le \frac{1}{2} ( b_{s-1} + b_s h^2||L||^2 ) h^{2s-2} ||L^{s-1} u_n||^2 \le 0.
\eeq
Therefore, the method is strongly stable. 
\end{proof}

When $p < s$ and $p$ is an even number, there are $s-p$ free parameters: $\{a_k\}_{k=p+1}^{s}$. If they form a solution to the system of the same number of equations $b_{k}=0$ for $p/2+1 \le k \le s-p/2$, then the leading index is $s-p/2+1$ and the order of energy accuracy is $r=2s-p+1$. Therefore, we have the following proposition. 
\begin{proposition} \label{prop:main}
For an $s$-stage RK method of order $p$ applied to system (\ref{eq:de}), if $p$ is even, then $p+1 \le r \le 2s-p+1$.
\end{proposition}

Next, we derive RK methods with energy order $r$ larger than $p+1$, including the case when $r$ reaches its upper bound $2s-p+1$.
When $p$ is even and $s=p+1$ or $s=p+2$, we can derive the expressions of the coefficients $\{a_k\}$ using the following two propositions.
\begin{proposition} \label{prop:p=s-1}
If $s>1$ is odd, and the coefficients of an $s$-stage method applied to the system (\ref{eq:de}) satisfy the following conditions:
\bea
a_k &=& \frac{1}{k!}, ~ k =1,2,...,s-1, 		\label{eq:p2_ak}\\
a_s &=& \frac{1}{s!} - \frac{1}{(s+1)!},  	\label{eq:p2_as}
\eea
then $p = s-1$ and $r = s+2$. 
\end{proposition}

\begin{proposition} \label{prop:p=s-2}
If $s>4$ is even, and the coefficients of an $s$-stage method applied to the system (\ref{eq:de}) satisfy the following conditions:
\bea
a_k 		&=& \frac{1}{k!}, ~ k =1,2,...,s-2,\\
a_{s-1} 	&=&  \frac{3}{(s+2)!} - \frac{3}{(s+1)!} + \frac{1}{(s-1)!} , \\
a_s 		&=& \frac{3}{(s+2)!} - \frac{3}{(s+1)!} + \frac{1}{s!},
\eea
then $p = s-2$ and $r = s+3$. 
\end{proposition}

According to Proposition \ref{prop:main}, when $p=2$, an $s$-stage method achieves the best energy accuracy $r=2s-1$ 
if the set of coefficients $\{a_k\}_{k=3}^s$ forms a solution to the system of equations: $b_k=0$ for $2\le k \le s-1$. 
For $3 \le s \le 5$, the coefficients are listed in Table \ref{tab:p2}, and the stability regions of these second-order methods and a fourth-order method are shown in Figure \ref{fig:p2}. The RK(4,4,5) method is the classical fourth-order RK method, known as RK4. 
Most methods have comparable or larger stability regions than RK4, except RK(4,2,7)-b and RK(5,2,9)-b. 

\begin{table}
\caption{Coefficients and the corresponding orders of accuracy when $p=2$.}
\label{tab:p2}
\begin{tabular}{c|ccc|ccccc} \hline \noalign{\smallskip}
method & $s$ & $p$ & $r$ & $a_1$ & $a_2$	& $a_3$ & $a_4$ & $a_5$ 					\\ 
\noalign{\smallskip} \hline \noalign{\smallskip}
RK(3,2,5)~~	& 3 & 2 & 5 & 1 & $\frac{1}{2}$ & $\frac{1}{8}$ & - & - 								\\ \noalign{\smallskip}
RK(4,2,7)-a 	& 4 & 2 & 7 & 1 & $\frac{1}{2}$ & $\frac{2 - \sqrt{2}}{4}$ & $\frac{3 - 2\sqrt{2}}{8}$ & - 	\\ \noalign{\smallskip} 
RK(4,2,7)-b 	& 4 & 2 & 7 & 1 & $\frac{1}{2}$ & $\frac{2 + \sqrt{2}}{4}$ & $\frac{3 + 2\sqrt{2}}{8}$ & - 	\\ \noalign{\smallskip} 
RK(5,2,9)-a 	& 5 & 2 & 9 & 1 & $\frac{1}{2}$ & $\frac{\sqrt{5}-1}{8}$ & $\frac{\sqrt{5}-2}{8}$ & $\frac{(\sqrt{5}-2)^2}{16(\sqrt{5}-1)}$  \\ \noalign{\smallskip}
RK(5,2,9)-b 	& 5 & 2 & 9 & 1 & $\frac{1}{2}$ & $\frac{1}{4}$ & $\frac{1}{8}$ & $\frac{1}{32}$ 			\\ 
\noalign{\smallskip} \hline
\end{tabular}
\end{table}

\begin{figure}
\begin{center}	
 \includegraphics[width=0.75\textwidth]{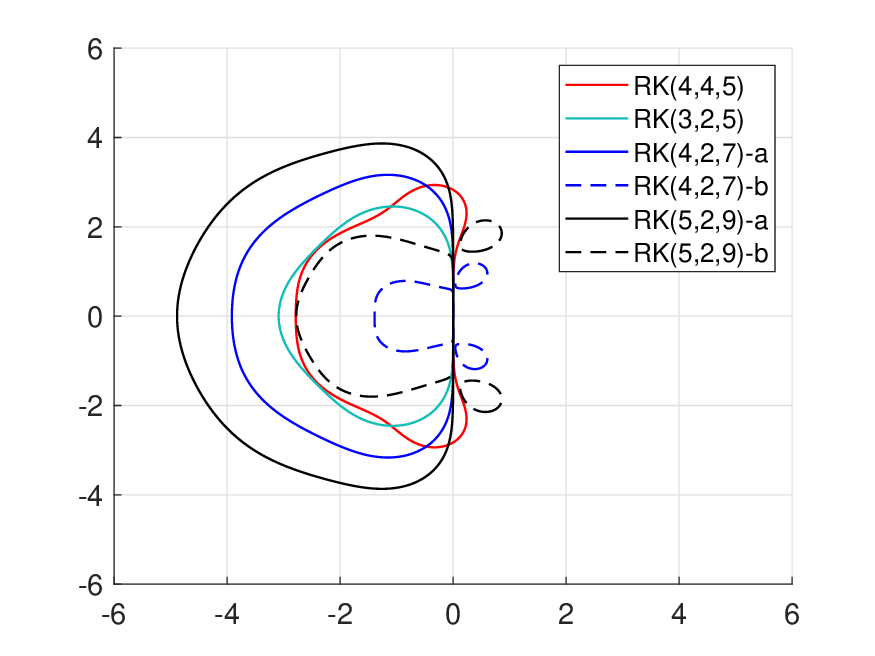}
 \caption{Stability regions of several second-order methods and the RK(4,4,5) method.}
 \label{fig:p2}
\end{center}
\end{figure}

When $p=2$, the leading coefficient $b_s = a_s^2 > 0$, and the method is not strongly stable \cite{sun2019strong}. 
To obtain strongly stable methods, we consider $p=4$, where the energy accuracy can reach an order of $r=2s - 3$, 
given that there exists a solution to the system of equations $b_k=0$ for $3\le k \le s-2$. 
For example, when $s=7$ we obtain 
\bea 
b_1 &=& a_1^2 - 2a_0a_2, \\
b_2 &=& a_2^2 - 2a_1a_3 + 2a_0a_4, \\
b_3 &=& a_3^2 - 2a_2a_4 + 2a_1a_5 - 2a_0a_6, \\
b_4 &=& a_4^2 - 2a_3a_5 + 2a_2a_6 - 2a_1a_7, \\
b_5 &=& a_5^2 - 2a_4a_6 + 2a_3a_7, \\
b_6 &=& a_6^2 - 2a_5a_7, \\
b_7 &=& a_7^2. 
\eea
If $p=4$, then $a_1=1$, $a_2=1/2$, $a_3=1/6$, $a_4=1/24$, and $b_1=b_2=0$. 
Thus, by setting $b_3=b_4=b_5=0$, we obtain a system of three equations with three unknowns $a_5$, $a_6$, and $a_7$:
\beq \label{eq:s7}
\left\{
\barr{ll}
2 a_5 - 2 a_6 - \frac{1}{72}						&= 0, 	\\
 -\frac{1}{3} a_5 + a_6 - 2a_7  + \frac{1}{576} 	&= 0, 	\\
a_5^2 - \frac{1}{12} a_6 + \frac{1}{3} a_7 		&= 0.	
\earr
\right.
\eeq
System (\ref{eq:s7}) has one positive solution: $a_5=\frac{\sqrt{10}-2}{144}$, $a_6=\frac{\sqrt{10}-3}{144}$, and $a_7=\frac{8\sqrt{10}-25}{3456}$. 
Furthermore, in this case, we can verify that the leading coefficient $b_6 = \frac{\sqrt{9604}-\sqrt{9610}}{248832} < 0$, so the method is strongly stable, as Theorem 2.7 in \cite{sun2019strong} has proven that a negative leading coefficient implies strong stability. 
Similarly, we can calculate the coefficients and verify the strong stability for $s=5$ and $s=6$.
Additionally, we can find the strong stability criterion using Lemma \ref{lemma1} and it is summarized in the following theorem.

\begin{theorem}\label{theorem1}
For an $s$-stage, fourth-order Runge--Kutta method applied to the system (\ref{eq:de}), 
if the coefficients $\{a_k\}_{k=3}^s$ form a solution to the system of equations $b_k=0$ for $3 \le k \le s-2$, and $b_{s-1}=a_{s-1}^2 - 2 a_s a_{s-2} < 0$, 
then the order of energy accuracy is $r = 2s-3$. Furthermore, the method is strongly stable if 
\beq
h||L|| \le \lambda = \sqrt{\frac{2 a_s a_{s-2} - a_{s-1}^2}{a_s^2}}. \label{eq:stability1}
\eeq
\end{theorem}

The energy equation (\ref{eq:energy}) yields
\beq
\mathcal{E}_{n+1} \le \mathcal{E}_n + \frac{1}{2} \sum_{k=m}^{s} b_k h^{2k} ||L^k||^2 ||u_n||^2 
				  = \left(1 + \sum_{k=m}^{s} b_k h^{2k} ||L^k||^2 \right) \mathcal{E}_n.
\eeq
Consequently, by induction, we have
\beq
\mathcal{E}_{n} \le \left(1 + \sum_{k=m}^{s} b_k h^{2k} ||L^k||^2 \right)^{n} \mathcal{E}_0,
\eeq
and the relative energy error satisfies
\begin{align}
\left|\frac{\mathcal{E}_{n}  - \mathcal{E}_0}{\mathcal{E}_0} \right|
	& \le n |b_m|\cdot||L^m||^2 \cdot h^{2m} + \mathcal{O}(h^{2m+1}), \\
	& = T |b_m|\cdot ||L^m||^2 \cdot h^{2m-1} + \mathcal{O}(h^{2m+1}),
\end{align}
where $T=nh$ is the final time. 
Furthermore, applying Equation (\ref{eq:bk}) and the triangle inequality, we obtain
\beq
|b_m| = \left|\sum_{i=\text{max}(0,2m-s)}^{\text{min}(2m,s)} (-1)^{m+i} a_i a_{2m-i} \right| 
	\le \sum_{i=\text{max}(0,2m-s)}^{\text{min}(2m,s)} |a_i a_{2m-i}|,
\eeq
which implies that $b_m$ is bounded.
Therefore, we establish the following theorem regarding the global energy error.

\begin{theorem}\label{theorem2}
For a Runge--Kutta method applied to the system (\ref{eq:de}), the relative energy error satisfies 
\beq
\left|\frac{\mathcal{E}_{n}  - \mathcal{E}_0}{\mathcal{E}_0} \right| \le T |b_m|\cdot ||L^m||^2 \cdot h^{2m-1} + \mathcal{O}(h^{2m+1}),
\eeq
where $m$ is the leading index in the energy equation (\ref{eq:energy}), $T=nh$ is the final time, 
and $b_m$ is  bounded by: 
\beq
|b_m| \le \sum_{i=\text{max}(0,2m-s)}^{\text{min}(2m,s)} |a_i a_{2m-i}|.
\eeq
\end{theorem}

Table \ref{tab:p4} lists the coefficients of four- to seven-stage methods of order four and the corresponding $\lambda$ values in the strong stability criterion (\ref{eq:stability1}). Only coefficients with indices larger than $p$ are listed, as $a_k=1/k!$ if $k\le p$. 
Additionally, the stability regions are illustrated in Figure \ref{fig:p4}. It demonstrates that as $s$ increases, the stability region expands.

\begin{table}
\begin{center}
\caption{Coefficients and stability criteria of fourth-order RK methods for $4 \le s\le 7$.}
\label{tab:p4}
\begin{tabular}{ccccccccc} 
\hline\noalign{\smallskip}
method & $s$ & $p$ & $r$ & $a_5$ & $a_6$ & $a_7$	& $b_{s-1}$ &$\lambda$	\\ 
\noalign{\smallskip} \hline\noalign{\smallskip}
RK(4,4,5)~~	& 4 & 4 & 5 &  -  & - & -								& $-\frac{1}{72}$ & $2\sqrt{2}$		\\ \noalign{\smallskip}
RK(5,4,7)~~	& 5 & 4 & 7 &  $\frac{1}{144}$  & - & -					& $-\frac{1}{1728}$ & $2\sqrt{3}$	\\ \noalign{\smallskip}
RK(6,4,9)~~	& 6 & 4 & 9 & $\frac{1}{128}$ & $\frac{1}{1152}$ & - 	& $-\frac{5}{442368}$ & $\sqrt{15}$		\\ \noalign{\smallskip}
RK(7,4,11)~~& 7 & 4 & 11 & $\frac{\sqrt{10}-2}{144}$ & $\frac{\sqrt{10}-3}{144}$ & $\frac{8\sqrt{10}-25}{3456}$	& $-\frac{\sqrt{9610}-\sqrt{9604}}{248832}$ & $\sqrt{\frac{48(31\sqrt{10}-98)}{5(253-80\sqrt{10})}}\approx 4.06$	 \\ 
\noalign{\smallskip} \hline
\end{tabular}
\end{center}
\end{table}

\begin{figure}
\begin{center}
 \includegraphics[width=0.75\textwidth]{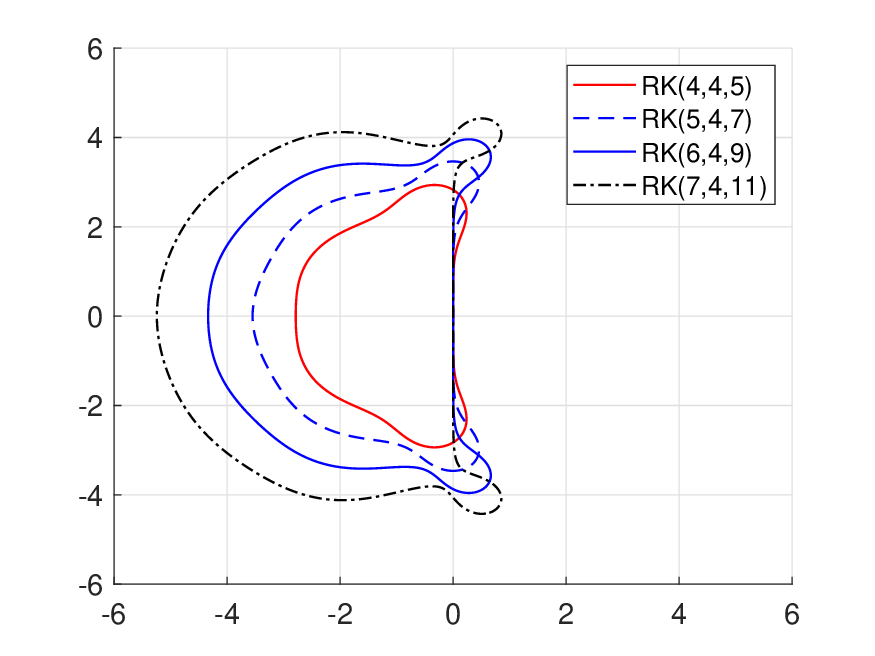}
 \caption{Stability regions of several fourth-order methods.}
 \label{fig:p4}
\end{center}
\end{figure}

Because the order of energy accuracy exceeds that of both the solution and the number of stages, we refer to it as the energy-superconvergence phenomenon, and we term the corresponding RK method the energy-superconvergent RK (ESC-RK) method. 
%To our knowledge, this term was first introduced in \cite{gladwell1995energy}. % for partitioned RK method for Hamiltonian system.

To construct an $s$-stage RK method using the coefficients $\{a_k\}$, we can use the following algorithm: 
\beq
\begin{array}{ll}
k_0 		  &= 0, \\
k_j 		  &= c_j hL(u_n + k_{j-1}), ~~ j=1, 2, ..., s, \\
u_{n+1}  &= u_n + k_s,
\end{array} 
\eeq
where 
\beq
c_j = \frac{a_{s-j+1}}{a_{s-j}},~~ j=1, 2, ..., s.
\eeq

Another approach to construct an $s$-stage RK method using the coefficients $\{a_k\}$ is through the generalized iterated Crank-Nicolson procedure \cite{liu2024iterated2}. The algorithm proceeds as follows: 
\begin{align}
k_1 &= L(u_n),                													\label{eq_icn_s1} \\
k_{j+1} &= L( u_n + c_j h ( (1-\theta_j) k_1 + \theta_j k_j ) ), \quad j=1,2,...,s-1, 	\label{eq_icn_s2}\\
u_{n+1} &= u_n + c_s h ( (1-\theta_s) k_1 + \theta_s k_s),						\label{eq_icn_s3}
\end{align}
where the coefficients $c_j$ are defined by
\beq
c_j = 2^j \prod_{i=1}^{j} {\theta_i},\quad j=1,2,\ldots,s.
\eeq 
With $\theta_1=0.5$ and $c_1=2\theta_1=1$, the remaining coefficients $\{\theta_j\}$ and $\{c_j\}$ are determined by:
\begin{align}
%c_1 &= 1, \\
c_j &= \frac{2a_{s-j+2}}{a_{s-j+1}}, \label{eq:c_j}\\
%\theta_1 &= \frac{c_1}{2}, \\
\theta_j &= \frac{c_j}{2c_{j-1}}, 	\label{eq:theta_j}
\end{align}
for $j=2,3,\ldots,s$.

From equations (\ref{eq:c_j}) and (\ref{eq:theta_j}), we derive the relation
\beq
a_s a_{s-2} = \frac{c_2 a_{s-1}}{2} \frac{2a_{s-1}}{c_3} = \frac{a_{s-1}^2}{2\theta_3}.
\eeq
When the leading index $m=s-1$, we obtain 
\beq
b_{s-1} 	= a_{s-1}^2 - 2a_s a_{s-2}  %&= a_{s-1}^2 - \frac{2c_2}{c_3} a_{s-1}^2 \\
		= a_{s-1}^2 \left(1 - \frac{1}{\theta_3} \right). \label{eq:bmtheta}
\eeq
This implies that $b_{s-1}$ is bounded above by $a_{s-1}^2 \left|1 - \frac{1}{\theta_3} \right|$.
Furthermore, when $0 < \theta_3 < 1$, we have $b_{s-1} < 0$, which satisfies the strong stability condition required by Theorem \ref{theorem1}.

%%%%%%%%%%%%%%%%%%%%%%%%%%%%%%
	Furthermore, we can apply our results to analyze the stability and energy order of the original ICN method \cite{Teukolsky00}. 
	The original ICN method \cite{Teukolsky00} is a special case of the general ICN method (\ref{eq_icn_s1})-(\ref{eq_icn_s3}) when $\theta_j$ is fixed at 0.5: 
	\begin{align}
		k_1 &= L(u_n),                											\label{eq_icn_eq1} \\
		k_{j+1} &= L\left( u_n + \frac{h}{2} ( k_1 + k_j ) \right), \quad j=1,2,...,s-1, 	\label{eq_icn_eq2}\\
		u_{n+1} &= u_n + \frac{h}{2} ( k_1 + k_s).								\label{eq_icn_eq3}
	\end{align}
	We have $a_j = 2^{-j+1}$ for $j\ge 1$.  
	Thus, $a_1=1$, $a_2=1/2$, and $a_3=1/4 \ne 1/6$, so the original ICN method is only second order accuracy.
	
	Using Equation (\ref{eq:bk}) (the definition of $b_k$), we obtain
	\begin{equation}
		a_{k-i}a_{k+i} = \left\{
		\begin{array}{ll}
			2^{-2k+2}, & i<k, \\
			2^{-2k+1}, & i=k,
		\end{array} \right.
	\end{equation}
	and
	\begin{equation}
		b_k = \left\{
		\begin{array}{ll}
			0, 											& k \le \lfloor s/2 \rfloor, \\
			2^{-2k+2}[1 - 2 ~\text{mod}(s-k,2)] \ne 0, 	& k > 	\lfloor s/2 \rfloor,
			%\frac{1 - 2 ~\text{mod}(s-k,2)}{2^{2k-2}}, 	& k > 	\lfloor s/2 \rfloor,
		\end{array} \right.
	\end{equation}
	%\begin{align}
	%	b_k &= 0,  k \le \lfloor s/2 \rfloor, \\
	%	b_k &= 2^{-2k+2}(1 - 2 ~\text{mod}(s-k,2)) \ne 0, k > s/2.
	%\end{align}
	where $\text{mod}$ represents the modulo operation.
	Therefore, the leading index is $m=\lfloor s/2 \rfloor + 1$ and 
	the energy order of $s$-stage ICN method is summarized in the following theorem.
	\begin{theorem}\label{theorem_ICN_energyorder}
		For the system (\ref{eq:de}), the energy order of an $s$-stage iterated Crank-Nicolson method (\ref{eq_icn_eq1})-(\ref{eq_icn_eq3}) is given by
		$r=2 \lfloor s/2 \rfloor + 1$. Equivalently,
		\begin{equation}
			r = \left\{
			\begin{array}{ll}
				s, 	 & s~ \text{is odd}, \\
				s+1, & s~ \text{is even}.
			\end{array} \right.
		\end{equation}
	\end{theorem}
	This result is supported by numerical experiments presented in \cite{liu2024iterated}. 
	
	Furthermore,
	$s-m$ is even when $s \equiv 1~(\text{mod} ~4)~or~s \equiv 2~ (\text{mod} ~4)$ and 
	$s-m$ is odd  when $s \equiv 0~(\text{mod} ~4)~or~s \equiv 3~ (\text{mod} ~4)$. 
	Thus,
	\begin{equation}
		b_m = \left\{
		\begin{array}{ll}
			2^{-2\lfloor s/2 \rfloor}, & s \equiv 1~(\text{mod} ~4)~or~s \equiv 2~ (\text{mod} ~4), \\
			-2^{-2\lfloor s/2 \rfloor}, & s \equiv 0~(\text{mod} ~4)~or~s \equiv 3~ (\text{mod} ~4).
		\end{array} \right.
	\end{equation}
	%we obtain that $b_k=0$ for $k\le \lfloor s/2 \rfloor$ and $b_k\ne 0 $ for $k> \lfloor s/2 \rfloor$. 
	When $s \equiv 0~(\text{mod}~4)$ or $s \equiv 3~ (\text{mod}~4)$, we have $b_m<0$ and the method is strongly stable.
	This property was previously noted in \cite{Teukolsky00}.  
	
%%%%%%%%%%%%%%%%%%%%%%%%%%%%%%

%%%%%%%%%%%%%%%%%%%%%%%%%%%%%%%%%%%%%%%%%%%%%%%%%%%%%%%%%%%%%%%%
\section{ Energy-superconvergent RK for nonlinear problems}
\label{sec:nonlinear}

In this section, we discuss the extension of energy-superconvergent RK methods to the nonlinear ODE $U_t = F(U)$. 
The $s$-stage RK method is written as
\begin{align}
	k_j &= F\left(U_n + h \sum_{i=1}^{j-1} \alpha_{ji} k_i \right), \\
	U_{n+1} &= U_n + h \sum_{i=1}^{s} {\beta_i k_i}.
\end{align}
We begin with three-stage methods with fifth-order energy accuracy, followed by four- and five-stage methods achieving seventh-order energy accuracy.

\subsection{Three-stage energy-superconvergent methods}
%We consider three-stage RK algorithm %for equation $\frac{dU}{dt} = F(U)$:% 
%\begin{align}
%	k_1 &= F(U_n), \\
%	k_2 &= F(U_n + h \alpha_{21} k_1 ), \\
%	k_3 &= F(U_n + h \alpha_{31} k_1 + h \alpha_{32} k_2), \\
%	U_{n+1} &= U_n + h (\beta_1 k_1 + \beta_2 k_2 + \beta_3 k_3),
%\end{align}
We consider the following autonomous nonlinear system with amplitude-dependent frequency
\beq
\vtwo{u}{v}_t = \mtwo{0}{A(u,v)}{-A(u,v)}{0} \vtwo{u}{v}, \label{eq:vode}
\eeq
where $A(u,v)=A(|U|^2) = A(u^2+v^2)$. 
The energy of this system is given by
\beq
\mathcal{E} = \frac{1}{2} (u^2 + v^2).
\eeq
Since $A$ is amplitude-dependent, it follows that $u\frac{\partial A}{\partial v} = v\frac{\partial A}{\partial u}$ (or $uA_v=vA_u$). 
When $s=3$, using a Taylor expansion and applying the second-order conditions for the RK algorithm, 
we obtain 
\begin{equation} \label{eq:orderRK325}
\mathcal{E}_{n+1} = \mathcal{E}_n \left[ 1 + \frac{h^4}{4} \delta_1 A^4 + h^4 \delta_2 A^3(uA_u+vA_v)\right] + O(h^6),
\end{equation}
where
\begin{align}
	\delta_1 &= 1 - \beta_3 \alpha_{32}c_2,\\
	\delta_2 &= (\beta_2 c_2^2 + \beta_3 c_3^2) - (\beta_2 c_2^3 + \beta_3 c_3^3) + \beta_3 \alpha_{32} c_2 (2c_3 - c_2 - 2).
\end{align}
Here,
$c_i = \sum_{j=1}^{i-1}{\alpha_{ij}}$. 
When $\delta_1=\delta_2=0$, the fourth-order ($h^4$) terms  in Equation (\ref{eq:orderRK325}) vanish and the energy accuracy is of fifth order ($r=5$). 
Consequently, we obtain the following order conditions for a three-stage RK method to achieve a solution order of $p=2$ and an energy order of $r=5$:
\begin{align}
	\beta_1 + \beta_2 + \beta_3 &= 1, 				\label{eq:order1} \\
	\beta_2 c_2 + \beta_3 c_3 &= \frac{1}{2},		\label{eq:order2} \\
	1 - 8\beta_3\alpha_{32}c_2 &= 0,				\label{eq:energyorder1} \\
	(\beta_2 c_2^2 + \beta_3 c_3^2) - (\beta_2 c_2^3 + \beta_3 c_3^3) + \beta_3 \alpha_{32} c_2 (2 c_3 - c_2 - 2) &= 0. \label{eq:energyorder2}
\end{align}
Equation (\ref{eq:energyorder1}) is the fifth-order energy condition for the linear case: $\beta_3\alpha_{32}c_2 = a_3 = 1/8$.
For a linear problem, $A$ is a constant matrix, so $A_u=A_v=0$ and the second $h^4$ term in Equation (\ref{eq:orderRK325}) vanishes. 
Therefore, Equation (\ref{eq:energyorder2}) constitutes the additional order condition arising from the nonlinearity. 

The solutions to system (\ref{eq:order1})-(\ref{eq:energyorder2})  can be expressed as 
\begin{align}
	\beta_3 	&= \frac{4(c_2^3-c_2^2) + c_2(2+c_2-2c_3)}{8c_2c_3(c_3-c_2)(1-c_2-c_3)}, \\
	\beta_2 	&= \frac{4(c_3^2-c_3^3) - c_3(2+c_2-2c_3)}{8c_2c_3(c_3-c_2)(1-c_2-c_3)}, \\
	\beta_1 	& = 1 - \beta_2 - \beta_3,\\
	\alpha_{32} 	&= \frac{1}{8\beta_3c_2},\\
	\alpha_{31} 	&= c_3 - \alpha_{32},\\
	\alpha_{21} 	&= c_2.
\end{align}
When $c_2=1/2$ and $c_3=1$, we obtain the following set of coefficients:
\beq
\alpha_{21} = \frac{1}{2},~\alpha_{31}=0,~\alpha_{32}=1,
~\beta_1=\frac{1}{4},~\beta_2=\frac{1}{2},~\beta_3=\frac{1}{4},
~c_2=\frac{1}{2},~c_3 = 1,
\eeq
and the corresponding RK method is: 
\begin{align}
	k_1 &= F(U_n), 										\label{eq:rk325_1}\\
	k_2 &= F(U_n + \frac{h}{2} k_1 ), 					\label{eq:rk325_2}\\
	k_3 &= F(U_n + h k_2 ), 							\label{eq:rk325_3}\\
	U_{n+1} &= U_n + \frac{h}{4}( k_1 + 2 k_2 + k_3).	\label{eq:rk325_4}
\end{align}

This method is referred to as RK325, reflecting its three stages ($s=3$), second-order solution accuracy ($p=2$), 
and fifth-order energy accuracy ($r=5$). 

For the nonlinear cases, we use the notation RK$spr$ instead of RK($s,p,r$). RK($s,p,r$) refers to a family of RK methods developed in the previous section for linear systems, whereas RK$spr$ denotes a specific energy-superconvergent explicit RK method designed for autonomous nonlinear systems with amplitude-dependent frequencies.

\subsection{Four-stage and five-stage energy-superconvergent methods}

For four- and five-stage methods, we consider a special case of the autonomous nonlinear system (\ref{eq:vode}) 
where $A$ is linear with respect to $u^2+v^2$:
\begin{equation}
	A(u^2+v^2) = a + b (u^2+v^2),
\end{equation}
with constants $a$ and $b$. 

To achieve seventh-order energy accuracy, the Taylor expansion and order conditions lead to a large system of nonlinear equations. 
Specifically, when $s=4$, $p=2$, and $r=7$, the system is provided in Appendix A; when $s=5$, $p=4$, and $r=7$, the system is given in Appendix B. 
Solving these nonlinear equations numerically, we obtain three RK methods with seventh-order energy accuracy. 
Two are four-stage, second-order methods, referred to as RK427a and RK427b, while the third one is a five-stage, fourth-order method designated as RK547. 
The coefficients are provided in Table \ref{tab:r7}. 
The remaining coefficients ($\alpha_{j1}$ and $\beta_1$) can be determined using the following relations:
\begin{align} 
	\alpha_{21} &= c_2,												\label{eq:a21}	\\
	\alpha_{31} &= c_3 - \alpha_{32},								\label{eq:a31}	\\
	\alpha_{41} &= c_4 - \alpha_{43} - \alpha_{42},					\label{eq:a41}	\\
	\alpha_{51} &= c_5 - \alpha_{54} - \alpha_{53} - \alpha_{52},	\label{eq:a51}	\\
	\beta_1 	&= 1 - \beta_2 - \beta_3 - \beta_4 - \beta_5. 		\label{eq:b1}
\end{align}

\begin{table}							
	\begin{center}
	\caption{Coefficients of three RK methods of energy order seven (RK427a, RK427b, and RK547). 
		Coefficients that are not in this table can be calculated using system (\ref{eq:a21})-(\ref{eq:b1}). }							
		\label	{tab:r7}						
		\begin{tabular}{llll}					
			\hline \noalign{\smallskip}							
			Method	&	RK427a	&	RK427b	&	RK547	\\
			\hline \noalign{\smallskip}							
			$c_2$			&	0.5					&	0.25				&	~0.20892886718970132831	\\
			$c_3$			&	1.126707539929660	&	0.665773693052985	&	~0.94900422371489578932	\\
			$c_4$			&	0.25				&	1					&	-0.07278204742298131913	\\
			$c_5$			&						&						&	~0.68134086764041323914	\\
			$\alpha_{32}$	&	1.707869936784730	&	0.684915394057140	&	~0.94900422371489578932	\\
			$\alpha_{42}$	&	0					&	0					&	~0.28579013534165120802	\\
			$\alpha_{43}$	&	0.122516522451472	&	0.738611266763089	&	-0.35857218276463254103	\\
			$\alpha_{52}$	&						&						&	~0.72441810631776648588	\\
			$\alpha_{53}$	&						&						&	~0.18811713344639199863	\\
			$\alpha_{54}$	&						&						&	-0.23119437212374524537	\\
			$\beta_2$		&	0.585723950941299	&	0.340967677611324	&	~0.42481264428380438591	\\
			$\beta_3$		&	0.138358669923910	&	0.368265583183962	&	~0.13163010989793449967	\\
			$\beta_4$		&	0.204993071645761	&	0.169576543256471	&	~0.02106663674573944212	\\
			$\beta_5$		&						&						&	~0.42249060907252167230	\\
			\hline \noalign{\smallskip}							
		\end{tabular}							
	\end{center}
\end{table}

%%%%%%%%%%%%%%%%%%%%%%%%%%%%%%%%%%%%%%%%%%%%%%%%%%%%%%%%%%%%%%%%

\section{Linear examples}
\label{sec:examples}

In this section, various linear ODE systems are simulated using the proposed methods, including second-order ODEs for harmonic oscillators, one-dimensional integro-differential equation for peridynamics, and the ODE systems derived from the semi-discretized one-dimensional Maxwell's equations of electrodynamics. 

The convergence of the solution is measured using standard $L_1$, $L_2$, and $L_\infty$ error norms, denoted by $\epsilon_1$, $\epsilon_2$, and $\epsilon_\infty$, respectively. 
To test the energy accuracy, we calculate the convergence rate of the relative energy deviation:
\beq
\epsilon_E = \frac{\mathcal{E}_T - \mathcal{E}_0}{\mathcal{E}_0},
\eeq
where $\mathcal{E}_0$ and $\mathcal{E}_T$ represent the energy at the initial and the final times, respectively. 

\subsection{Second-order differential equation for harmonic oscillator}
\label{sec:ODE}

In the first example, we consider the second-order differential equation for a harmonic oscillator: 
\beq 
\begin{array}{ll}
& x''(t) +a^2 x(t) = 0, ~ 0 \le t \le T,~ \label{eq:ex1} \\
& x(0) = x_0, ~ x'(0) = v_0.
\end{array}
\eeq 
This equation can be written in matrix form as: 
\beq
\frac{d}{dt} u = \mtwo{0}{1}{-a^2}{0} u, \label{eq:ex1matrix}
\eeq
where $u=\vtwo{x}{v} $ and $v(t)=x'(t)$. 
The total energy of this system is given by:
\beq
\mathcal{E}=\frac{1}{2}\innerproduct{u}{u}_H = \frac{1}{2} (a^2 x^2 + v^2),
\eeq
where $H = \mtwo{a^2}{0}{0}{1}$.

In our simulations, we set $a=1$, $T=80$, $x_0=1$, and $v_0=0$. The exact solution is $x(t) = x_0 \cos(a t)$. 
We use a uniform time step $\Delta t = T/N_t$ for all methods, where $N_t$ represents the number of grid points.
   
Table \ref{tab:test1p2} presents the results obtained from several second-order RK methods, with coefficients provided in Table \ref{tab:p2}, 
along with the St{\"o}rmer--Verlet (SV) method \cite{hairer2003geometric}. 
We adopt RK(4,2,7)-a and RK(5,2,9)-a due to their larger stability regions compared to their corresponding b-versions. 
These results illustrate the convergence rates of these methods, including their orders of energy accuracy. 
Notably, when $N_t=1600$, the relative energy deviation of RK(5,2,9) achieves machine precision ($\sim 10^{-15}$). 
Regarding the solution accuracy, the RK(3,2,5) has similar accuracy to the SV method, and as $s$ increases, the errors decrease. 
For this set of second-order RK methods, the relative energy deviations are positive, indicating that these second-order methods are not strongly stable. 

The results of fourth-order methods are presented in Table \ref{tab:test1p4}. 
Similar to the second-order methods, the errors decrease as $s$ increases. 
However, unlike the second-order methods, the relative energy deviations are all negative, indicating numerical dissipation. 
Table \ref{tab:test1cpu} compares the computational performance of four fourth-order RK methods used to integrate the harmonic oscillator to a target energy accuracy of $|\epsilon_E| < 1 \times 10^{-13}$.
As the number of stages in the RK method increases, the computational efficiency regarding energy accuracy improves.
The seven-stage method RK(7,4,11) permits a step size that is 72 times larger than the four-stage method RK(4,4,5), resulting in a simulation that is 40 times faster.

%We compare the performance of various fourth-order methods, as illustrated in Table \ref{tab:test1cpu}. 
%In this test, the seven-stage method exhibits better computational efficiency than the 4-stage method as it achieves a smaller error with a coarser mesh ($N_t/2$). 
%It's worth noting that the seven-stage method has a larger step size limit (refer to Table \ref{tab:p4}), so better efficiency can be achieved if we use a larger $\Delta t$.

%To assess energy dissipation, we conduct long time simulations ($T=1000$) with $N_t = 5000$. 
%Figure \ref{fig:decay_osc} illustrates the time history plots of the magnitude of relative energy deviations for the fourth-order methods, along with the corresponding linear fitting lines. The slope of the fitting lines decrease by several orders of magnitude as $s$ increases by one, indicating a significant improvement in energy accuracy when superconvergent methods are used.  

\begin{table}																			
\caption{Solution and energy convergence rates of harmonic oscillator simulations using several second-order methods.} 				
\label{tab:test1p2}																		
\begin{tabular}{cccccccccc}	\hline\noalign{\smallskip}
Method	& $N_t$	&$\epsilon_1$ 	&	Ratio	&	$\epsilon_2$&	Ratio	&$\epsilon_\infty$&	Ratio	&$\epsilon_E$	&	Ratio	\\ 
		\hline\noalign{\smallskip}					
SV		&	100	&	6.62E-01	&			&	8.30E-01	&			&	1.77E+00	&			&	-5.43E-02	&			\\						
		&	200	&	1.74E-01	&	1.93	&	2.22E-01	&	1.90	&	5.30E-01	&	1.74	&	-3.30E-02	&	0.72	\\						
		&	400	&	4.28E-02	&	2.02	&	5.48E-02	&	2.02	&	1.34E-01	&	1.99	&	-9.99E-03	&	1.72	\\						
		&	800	&	1.06E-02	&	2.01	&	1.36E-02	&	2.01	&	3.32E-02	&	2.01	&	-2.49E-03	&	2.01	\\						
		&	1600&	2.65E-03	&	2.00	&	3.40E-03	&	2.00	&	8.29E-03	&	2.00	&	-6.18E-04	&	2.01	\\	 \hline\noalign{\smallskip}						
RK(3,2,5)&	100	&	7.54E-01	&			&	9.49E-01	&			&	2.02E+00	&			&	5.05E-01	&			\\						
		&	200	&	1.79E-01	&	2.08	&	2.29E-01	&	2.05	&	5.46E-01	&	1.89	&	1.29E-02	&	5.29	\\						
		&	400	&	4.31E-02	&	2.05	&	5.52E-02	&	2.05	&	1.35E-01	&	2.02	&	4.00E-04	&	5.01	\\						
		&	800	&	1.06E-02	&	2.02	&	1.37E-02	&	2.02	&	3.33E-02	&	2.02	&	1.25E-05	&	5.00	\\						
		&	1600&	2.65E-03	&	2.00	&	3.40E-03	&	2.00	&	8.29E-03	&	2.00	&	3.91E-07	&	5.00	\\	 
		\hline\noalign{\smallskip}						
RK(4,2,7)&	100	&	3.49E-01	&			&	4.45E-01		&		&	9.70E-01	&			&	7.75E-03	&			\\						
		&	200	&	8.41E-02	&	2.05	&	1.08E-01	&	2.04	&	2.62E-01	&	1.89	&	6.03E-05	&	7.01	\\						
		&	400	&	2.07E-02	&	2.02	&	2.66E-02	&	2.02	&	6.47E-02	&	2.02	&	4.71E-07	&	7.00	\\						
		&	800	&	5.16E-03	&	2.01	&	6.61E-03	&	2.01	&	1.61E-02	&	2.01	&	3.68E-09	&	7.00	\\						
		&	1600&	1.29E-03	&	2.00	&	1.65E-03	&	2.00	&	4.02E-03	&	2.00	&	2.87E-11	&	7.00	\\	 \hline\noalign{\smallskip}					
RK(5,2,9)&	100	&	2.05E-01	&			&	2.64E-01	&			&	6.17E-01	&			&	8.53E-05	&			\\						
		&	200	&	5.03E-02	&	2.03	&	6.44E-02	&	2.03	&	1.57E-01	&	1.98	&	1.67E-07	&	9.00	\\						
		&	400	&	1.24E-02	&	2.01	&	1.59E-02	&	2.01	&	3.88E-02	&	2.01	&	3.25E-10	&	9.00	\\						
		&	800	&	3.10E-03	&	2.01	&	3.97E-03	&	2.01	&	9.68E-03	&	2.00	&	6.31E-13	&	9.01	\\						
		&	1600&	7.74E-04	&	2.00	&	9.92E-04	&	2.00	&	2.42E-03	&	2.00	&$\sim10^{-15}$	&	-		\\	\hline					
\end{tabular}																							
\end{table}

\begin{table}																								
\caption{Solution and energy convergence rates of harmonic oscillator simulations using several fourth-order RK methods.} 	
\label{tab:test1p4}																									
\begin{tabular}{cccccccccc}																								
\hline\noalign{\smallskip}
Method	&$N_t$	&$\epsilon_1$&	Ratio	&$\epsilon_2$	&	Ratio	&$\epsilon_\infty$&	Ratio	&$\epsilon_E$	&	Ratio	\\	\hline\noalign{\smallskip}						
RK(4,4,5)&100&	8.24E-02	&			&	1.04E-01	&			&	2.40E-01	&			&	-2.85E-01	&			\\			
	&	200	&	5.43E-03	&	3.92	&	6.94E-03	&	3.91	&	1.63E-02	&	3.88	&	-1.11E-02	&	4.68	\\			
	&	400	&	3.39E-04	&	4.00	&	4.35E-04	&	4.00	&	1.03E-03	&	3.99	&	-3.54E-04	&	4.97	\\			
	&	800	&	2.12E-05	&	4.00	&	2.72E-05	&	4.00	&	6.54E-05	&	3.97	&	-1.11E-05	&	4.99	\\			
	&	1600&	1.32E-06	&	4.00	&	1.70E-06	&	4.00	&	4.12E-06	&	3.99	&	-3.47E-07	&	5.00	\\	\hline\noalign{\smallskip}		
RK(5,4,7)&100 &	1.78E-02	&			&	2.26E-02	&			&	5.34E-02	&			&	-9.15E-03	&			\\			
	&	200	&	9.62E-04	&	4.21	&	1.23E-03	&	4.20	&	2.98E-03	&	4.16	&	-7.48E-05	&	6.93	\\			
	&	400	&	5.75E-05	&	4.06	&	7.37E-05	&	4.06	&	1.79E-04	&	4.06	&	-5.91E-07	&	6.99	\\			
	&	800	&	3.55E-06	&	4.02	&	4.55E-06	&	4.02	&	1.11E-05	&	4.01	&	-4.63E-09	&	7.00	\\			
	&	1600&	2.21E-07	&	4.01	&	2.83E-07	&	4.01	&	6.91E-07	&	4.00	&	-3.62E-11	&	7.00	\\	\hline\noalign{\smallskip}		
RK(6,4,9)&100&	6.01E-03	&			&	7.66E-03	&			&	1.84E-02	&			&	-1.16E-04	&			\\			
	&	200	&	3.49E-04	&	4.11	&	4.46E-04	&	4.10	&	1.08E-03	&	4.09	&	-2.35E-07	&	8.95	\\			
	&	400	&	2.14E-05	&	4.03	&	2.74E-05	&	4.03	&	6.66E-05	&	4.02	&	-4.62E-10	&	8.99	\\			
	&	800	&	1.33E-06	&	4.01	&	1.70E-06	&	4.01	&	4.15E-06	&	4.01	&	-9.03E-13	&	9.00	\\			
	&	1600&	8.29E-08	&	4.00	&	1.06E-07	&	4.00	&	2.59E-07	&	4.00	&	$\sim10^{-15}$	&	-	\\	\hline\noalign{\smallskip}		
RK(7,4,11)&	100	&	2.92E-03&			&	3.72E-03	&			&	8.94E-03	&			&	-8.13E-07	&			\\			
	&	200	&	1.74E-04	&	4.07	&	2.23E-04	&	4.06	&	5.39E-04	&	4.05	&	-4.09E-10	&	10.96	\\			
	&	400	&	1.07E-05	&	4.02	&	1.38E-05	&	4.02	&	3.34E-05	&	4.01	&	-2.03E-13	&	10.97	\\			
	&	800	&	6.68E-07	&	4.01	&	8.56E-07	&	4.01	&	2.08E-06	&	4.00	&	$\sim10^{-15}$	&	-	\\			
	&	1600&	4.17E-08	&	4.00	&	5.34E-08	&	4.00	&	1.30E-07	&	4.00	&	$\sim10^{-15}$	&	-	\\	\hline		
\end{tabular}																									
\end{table}				
				
\begin{table}										
	\begin{center}										
		\caption{CPU runtime comparison of four fourth-order RK methods achieving a target energy accuracy of $|\epsilon_E| < 1 \times10^{-13}$ for the harmonic oscillator integrated up to $T=80$.}										
		\label{tab:test1cpu}										
		\begin{tabular}{lrccc}										
			\hline \noalign{\smallskip}										
			method 		&	 $N_t$	&$\Delta t$	&$\epsilon_E$	&	CPU time (s) \\ \hline\noalign{\smallskip}
			RK(4,4,5)	&	32000	&	0.003	&	-9.06E-14	&	1.700		\\ \noalign{\smallskip}
			RK(5,4,7)	&	3790	&	0.021	&	-8.55E-14	&	0.244		\\ \noalign{\smallskip}
			RK(6,4,9)	&	1040	&	0.077	&	-8.13E-14	&	0.083		\\ \noalign{\smallskip}
			RK(7,4,11) 	&	440		&	0.182	&	-6.78E-14	&	0.042		\\ \hline
			
		\end{tabular}										
	\end{center}									
\end{table}

%\begin{table}																								
%\caption{Comparison of several fourth-order RK methods for harmonic oscillator simulations.}											
%\label{tab:test1cpu}																									
%\begin{tabular}{ccccc}
%\hline	\noalign{\smallskip}	
%method 		& $N_t$	& $L_2$ error norm	& energy error		& CPU time (seconds) 	\\ \hline\noalign{\smallskip}					
%RK(4,4,5)	& 1600	& 1.70E-06 			& -3.47E-07			& 0.073					\\ \noalign{\smallskip}
%RK(5,4,7)	& 800	& 4.55E-06			& -4.63E-09			& 0.045					\\ \noalign{\smallskip}
%RK(6,4,9)	& 800	& 1.70E-06 			& -4.62E-10			& 0.054					\\ \noalign{\smallskip}
%RK(7,4,11) 	& 800	& 8.56E-07			& $\sim10^{-15}$	& 0.060 				\\ \hline
%\end{tabular}																									
%\end{table}	

%\begin{figure}
% \includegraphics[width=0.75\textwidth]{fig_energy_decay_oscillator1.eps}
% \caption{Time history plots (in log-log scale) of the magnitudes of relative energy deviation for harmonic oscillator simulations. The equations represent the linear fitting of the corresponding curve in the same color. Each simulation runs for 5000 steps until $T=10^3$.}
% \label{fig:decay_osc}
%\end{figure}

\subsection{Linear peridynamic integro-differential equation}
\label{sec:PD}

The theory of peridynamics \cite{Silling2000} can be employed to model the nonlocal wave interaction of a homogeneous and infinitely long bar. 
The governing equation is a one-dimensional nonlocal integro-differential equation, which in the linear case, can be written as
\beq
    \rho(x) u_{tt}(x,t) = \int_{-\infty}^\infty C(\Tilde{x}-x) (u(\Tilde{x},t) - u(x,t)) d\Tilde{x} + b(x,t),
    \label{eq:PD}
\eeq
where $u$, $\rho$, $b$, and $C$ represent the displacement field, the density, the external force, and the micromodulus function, respectively. 
In this test, we utilize a normal distribution for the micromodulus function 
\beq
C(x) = \left\{
    \begin{array}{ll}
	\frac{4}{\sqrt{\pi}} e^{-x^2}, 	& |x| < \delta, \\
      0, 								& \text{otherwise,}
    \end{array} \right. 
\eeq
where $\delta > 0$ represents the horizon.
To discretize Equation (\ref{eq:PD}), we partition the domain $[a,b]$ into $N_x$ cells and define $x_i$ at the cell centers: $x_i=a + (j-1/2)\Delta x$, $j=1,2,...,N_x$, where $\Delta x= (b-a)/N_x$. 
The integral in Equation (\ref{eq:PD}) can be approximated using the midpoint quadrature method: 
\beq
\int_{-\infty}^\infty C(\Tilde{x}-x_i) (u(\Tilde{x},t) - u(x_i,t)) d\Tilde{x} \approx \sum_{j=1}^{N_x}  C(x_j-x_i) (u_j - u_i) \Delta x.
\eeq
Let $U = (u_1,u_2,...,u_{N_x})^\top$, and assuming $\rho$ is constant, we can write the semi-discretized equation in matrix form: 
\beq
U_{tt} = -A U+ B, \label{eq:PDmatrix}
\eeq
where 
\beq 
B=\frac{1}{\rho}\left(b(x_1,t),...,b(x_{N_x},t)\right)^\top,
\eeq
and the entries of the matrix $A$ is defined by 
\beq
A_{ij} = \left\{
\begin{array}{ll}
-\frac{\Delta x}{\rho}C(x_j-x_i), 				&~ i \ne j,\\
\frac{\Delta x}{\rho}\sum_{k\ne i} C(x_k-x_i),	&~ i = j.
\end{array}\right.
\eeq 
System (\ref{eq:PDmatrix}) can be written as a system of first-order equations:
\bea
	\pdt U &=& V, \\
	\pdt V &=& -A U + B,
\eea
and in its matrix form, we have: 
\beq \label{eq:PDvec}
\pdt \vtwo{U}{V} = \mtwo{0}{I}{-A}{0} \vtwo{U}{V} + \vtwo{0}{B}.
\eeq 
The total energy (Hamiltonian) of the system is:
\beq
\mathcal{E} = \frac{1}{2} V^\top V + \frac{1}{2} U^\top A U - U^\top B. 
\eeq				

In our simulations, we set $b=0$, $\rho=1$, and $\delta = 5$. 
The computational domain is $x\in[-20,20]$ and $t\in[0,T]$. 
We let $\Delta t=\Delta x$. 
The initial condition is $u(x,0)=exp(-x^2)$ and $u_t(x,0)=0$. 
Similar to previous studies \cite{coclite2020numerical,liu2024iterated}, a periodic boundary condition is implemented to approximate the original problem of an infinitely long bar. 
The simulations run until $T=5$ so that they stop before the solution reaches the boundaries. 
The exact solution is given by \cite{weckner2005effect}:
\begin{equation}
    u_e(x,t)=\frac{2}{\sqrt{\pi}} \int_0^\infty {e^{-\xi^2} \cos(2x\xi) \cos\left(2t\sqrt{1-e^{-\xi^2}}\right)} d\xi.
\end{equation}

The solutions at two time snapshots using the seven-stage method RK(7,4,11) are shown in Figure \ref{fig:sol_PD}. 
Results on convergence rates are presented in Table \ref{tab:PD}. 
All methods provide fourth-order accurate solutions, but the accuracy improves with larger $s$ due to better energy accuracy.
Figure \ref{fig:long_sol_err_PD} shows the time history of the solution $L_2$ error norms for the SV method and four 4th-order RK methods with varying orders of energy accuracy. The simulations were run for an extended duration, up to $t=10$. 
The fourth-order RK method with a higher energy order consistently yields smaller solution errors than both the SV method and the RK methods with lower energy orders.

%A comparison of computational cost is illustrated in Table \ref{tab:PDcpu}, where the seven-stage method RK(7,4,11) demonstrates greater efficiency than the four-stage RK(4,4,5). The RK(7,4,11) method achieves a smaller error with half the mesh size and runs more than four times faster than RK(4,4,5). 
%Additionally, the total energy of the RK(7,4,11) solution reaches machine precision at the final time when $N_x=800$.
%Because matrix $A$ is dense, 
%doubling the mesh significantly increases the computational cost in terms of both memory and CPU run time. 

	Table \ref{tab:PDcpu} compares the computational performance of four fourth-order RK methods 
	required to achieve a target energy accuracy of $|\epsilon_E| < 5 \times 10^{-10}$.
	As the number of stages in the RK method increases, the computational efficiency regarding energy accuracy improves dramatically. 
	Specifically, the seven-stage method RK(7,4,11) permits a 24 times reduction in the number of spatial nodes ($N_x$) 
	and a 46 times reduction in time steps ($N_t$) compared to the four-stage RK(4,4,5) method. 
	This reduces runtime by a factor of more than 2,400.

\begin{figure}
\begin{center}
%\subfigure[]{ \includegraphics[width=0.3\textwidth]{fig_PD_RKs7p4r11_t0.eps} }
\subfigure[]{ \includegraphics[width=0.45\textwidth]{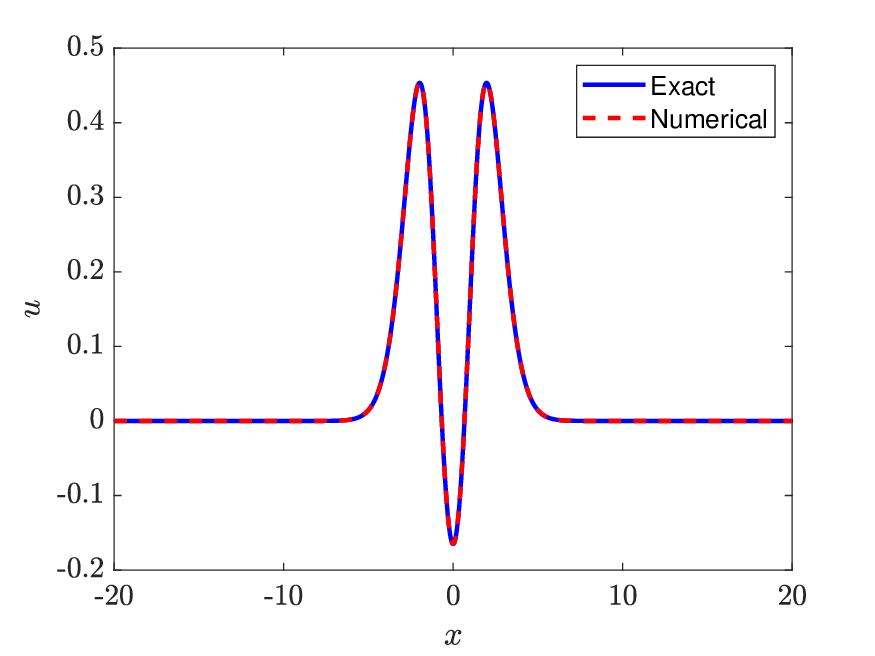} }
\subfigure[]{ \includegraphics[width=0.45\textwidth]{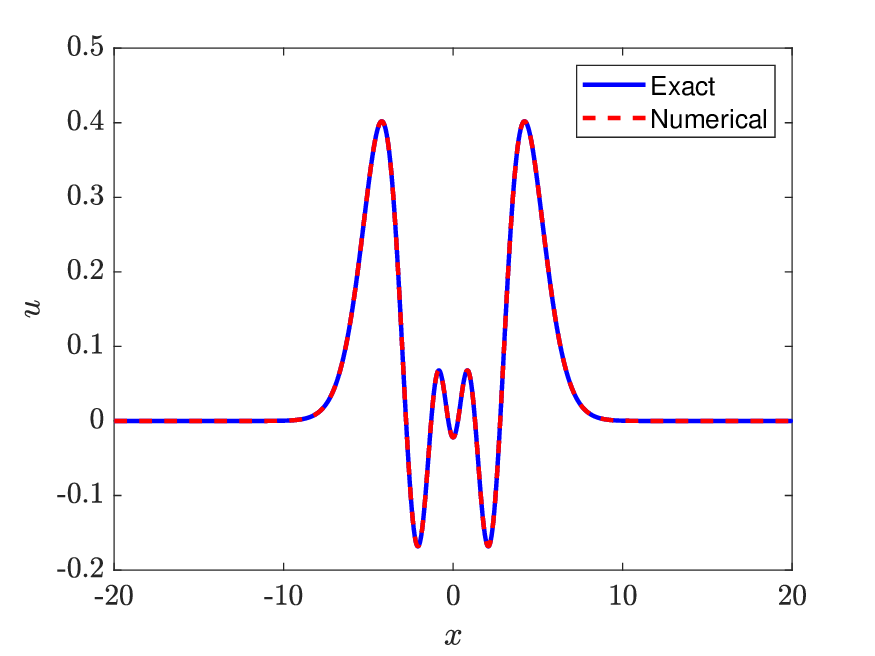} }
 \caption{Solutions of the linear peridynamic equation at (a) $t=2.5$ and (b) $t=5$ using RK(7,4,11).}
 \label{fig:sol_PD}
 \end{center}
\end{figure}

\begin{figure}
\begin{center}
 \includegraphics[width=0.6\textwidth]{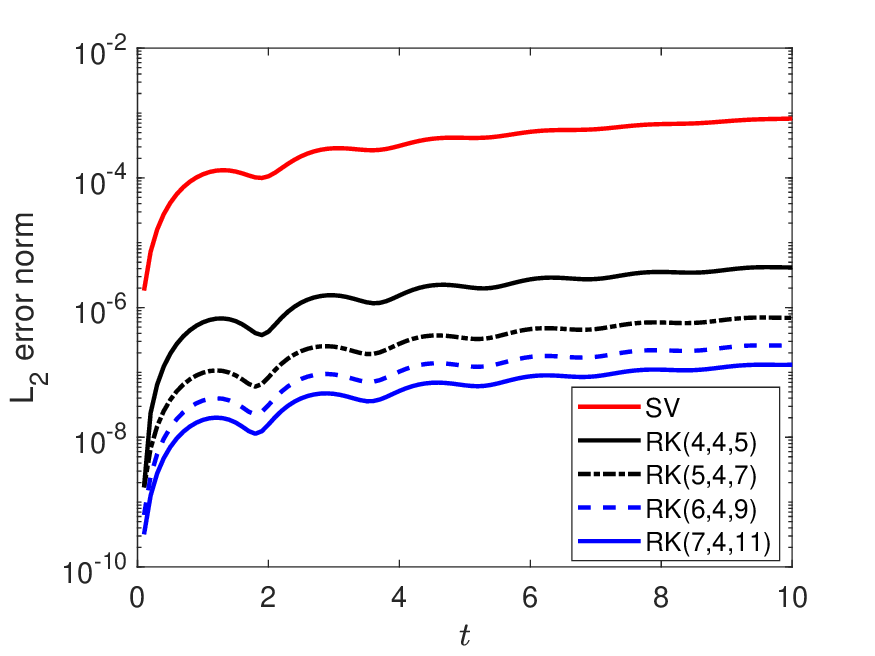} 
 \caption{Time history of the solution $L_2$ error norms of the linear peridynamic equation until $t=10$ using the SV method and four fourth-order RK methods: RK(4,4,5), RK(5,4,7), RK(6,4,9), and RK(7,4,11). The mesh size is $N_x=400$.}
 \label{fig:long_sol_err_PD}
 \end{center}
\end{figure}

\begin{table}																									
\caption{Solution and energy convergence rates of peridynamic integro-differential equations using fourth-order methods.}
\label{tab:PD}											
\begin{tabular}																						
	{ c		c		c		c		c		c		c		c		c		c	}	\hline\noalign{\smallskip}		
Method	&	N	&	$\epsilon_1$	&	Ratio	&	$\epsilon_2$	&	Ratio	&	$\epsilon_\infty$	&	Ratio	&	$\epsilon_E$	&	Ratio	\\	\hline\noalign{\smallskip}		
RK(4,4,5)&	100	&	2.62E-04	&	-		&	5.70E-04	&	-		&	2.05E-03	&	-		&	-5.86E-03	&	-		\\			
		&	200	&	1.58E-05	&	4.05	&	3.50E-05	&	4.03	&	1.26E-04	&	4.03	&	-1.85E-04	&	4.99	\\			
		&	400	&	9.63E-07	&	4.04	&	2.10E-06	&	4.06	&	7.52E-06	&	4.06	&	-5.84E-06	&	4.98	\\			
		&	800	&	5.93E-08	&	4.02	&	1.28E-07	&	4.03	&	4.62E-07	&	4.02	&	-1.83E-07	&	5.00	\\			
		&	1600&	3.67E-09	&	4.01	&	7.93E-09	&	4.02	&	2.87E-08	&	4.01	&	-5.72E-09	&	5.00	\\	\hline\noalign{\smallskip}		
RK(5,4,7)&	100	&	4.42E-05	&	-		&	9.47E-05	&	-		&	3.37E-04	&	-		&	-1.08E-04	&	-		\\			
		&	200	&	2.59E-06	&	4.09	&	5.57E-06	&	4.09	&	2.02E-05	&	4.06	&	-8.32E-07	&	7.01	\\			
		&	400	&	1.57E-07	&	4.05	&	3.38E-07	&	4.04	&	1.22E-06	&	4.05	&	-6.55E-09	&	6.99	\\			
		&	800	&	9.75E-09	&	4.01	&	2.10E-08	&	4.01	&	7.60E-08	&	4.00	&	-5.12E-11	&	7.00	\\			
		&	1600&	6.08E-10	&	4.00	&	1.31E-09	&	4.00	&	4.74E-09	&	4.00	&	-4.00E-13	&	7.00	\\	\hline\noalign{\smallskip}		
RK(6,4,9)&	100	&	1.53E-05	&	-		&	3.27E-05	&	-		&	1.20E-04	&	-		&	-9.68E-07	&	-		\\			
		&	200	&	9.51E-07	&	4.01	&	2.03E-06	&	4.01	&	7.38E-06	&	4.02	&	-1.86E-09	&	9.02	\\			
		&	400	&	5.86E-08	&	4.02	&	1.26E-07	&	4.01	&	4.54E-07	&	4.02	&	-3.66E-12	&	8.99	\\			
		&	800	&	3.65E-09	&	4.00	&	7.85E-09	&	4.00	&	2.84E-08	&	4.00	&	$\sim10^{-15}$	&	-	\\			
		&	1600&	2.28E-10	&	4.00	&	4.91E-10	&	4.00	&	1.77E-09	&	4.00	&	$\sim10^{-15}$	&	-	\\	\hline\noalign{\smallskip}		
RK(7,4,11)&	100	&	7.54E-06	&	-		&	1.61E-05	&	-		&	5.88E-05	&	-		&	-5.06E-09	&	-		\\			
		&	200	&	4.76E-07	&	3.99	&	1.02E-06	&	3.98	&	3.69E-06	&	3.99	&	-2.43E-12	&	11.02	\\			
		&	400	&	2.94E-08	&	4.02	&	6.33E-08	&	4.01	&	2.28E-07	&	4.02	&	-1.11E-15	&	11.10	\\			
		&	800	&	1.84E-09	&	4.00	&	3.95E-09	&	4.00	&	1.43E-08	&	4.00	&	$\sim10^{-15}$	&	-	\\			
		&	1600&	1.15E-10	&	4.00	&	2.47E-10	&	4.00	&	8.89E-10	&	4.01	&	$\sim10^{-15}$	&	-	\\	\hline		
\end{tabular}																														
\end{table}

\begin{table}										
		\begin{center}										
			\caption{CPU runtime comparison of four fourth-order RK methods achieving a target energy accuracy of 
				$|\epsilon_E| < 5 \times 10^{-10}$ for the peridynamic equation integrated up to $T=10$.}										
			\label{tab:PDcpu}										
			\begin{tabular}{lrrcr}										
				\hline \noalign{\smallskip}										
				method 		&	 $N_x$	&	$N_t$	&	$\epsilon_E$	&	CPU time (s) 		\\ \hline\noalign{\smallskip}
				RK(4,4,5)	&	3000	&	750		&	-4.94E-10		&	244.02		\\ \noalign{\smallskip}
				RK(5,4,7)	&	650		&	163		&	-4.40E-10		&	3.53		\\ \noalign{\smallskip}
				RK(6,4,9)	&	235		&	29		&	-4.32E-10		&	0.24		\\ \noalign{\smallskip}
				RK(7,4,11) 	&	125		&	16		&	-4.32E-10		&	0.10		\\ \hline
			\end{tabular}										
		\end{center}										
\end{table}										

%\begin{figure}[h]
% \centering
% \includegraphics[width=0.6\textwidth]{fig_energy_decay_PD.eps}
% \caption{Log-log plot of relative energy deviation of several fourth-order methods for peridynamic model. The equations are the linear fitting of the corresponding curve with save color. Each simulation runs for 1000 steps until $t=200~s$.}
% \label{fig:decay_PD}
%\end{figure}

 \subsection{One-dimensional Maxwell's equations}
 \label{sec:Maxwell}
 
Consider the one-dimensional Maxwell's equations 
\bea
\varepsilon_0 \pd{E_z}{t} &=& \pd{H_y}{x}, \\
\mu_0 \pd{H_y}{t} &=& \pd{E_z}{x}.
\eea
By employing a staggered grid and centered difference algorithm, we define $E^n_j = E_z(j\Delta x, n\Delta t)$ and $H^n_{j+1/2} = H_y((j+1/2)\Delta x, n\Delta t)$, yielding the semi-discrete equations:  
\bea
	\varepsilon_0 \pdt E_z(t_n,x_j) &=& \frac{H_{j+1/2}^n - H_{j-1/2}^n}{\Delta x},	\label{eq:semi_E} \\
	\mu_0 \pdt H_y(t_n,x_{j+1/2}) &=& \frac{E_{j+1}^n - E_j^n}{\Delta x}.			\label{eq:semi_H}
\eea
These equations can be written in matrix form as:
\begin{equation}
 \pdt \vtwo{\vec{E}}{\vec{H}} = \mtwo{1/\eps_0}{0}{0}{1/\mu_0} \mtwo{0}{C}{-C^\top}{0} \vtwo{\vec{E}}{\vec{H}},
\end{equation}
where the matrix $C$ represents the finite difference curl operator, and $\vec{E}^n=(E^n_0,E^n_1,...,E^n_{N_x})^\top$ and 
$\vec{H}^n=(H^n_{1/2},H^n_{3/2},...,H^n_{N_x+1/2})^\top$ are the solution vectors at time $n\Delta t$. 
For perfect electric conductor (PEC) boundaries, we have $E^n_0=E^n_N=0$. 
$H^n_{N+1/2}$ is outside the boundary and is set to zero. 
For example, when $N_x=4$, we have: 
\beq
C=\frac{1}{\Delta x}\pmat{ 	{ 1}&{ 0}&{ 0}&{ 0}&{0}\\
						{-1}&{ 1}&{ 0}&{ 0}&{0}\\
						{ 0}&{-1}&{ 1}&{ 0}&{0}\\
						{ 0}&{ 0}&{-1}&{ 1}&{0}\\
						{ 0}&{ 0}&{ 0}&{-1}&{1} }.
\eeq
The spectral radius of $C^\top C$ satisfies $\mathcal{\rho}(C^\top C)\le \frac{4}{\Delta x^2}$. 
Therefore, 
\beq
\left|\left| \mtwo{0}{C}{-C^\top}{0} \right|\right| = \sqrt{\mathcal{\rho}(C^\top C)} \le \frac{2}{\Delta x}.
\eeq
For the fourth-order RK methods, the corresponding stability condition becomes $\Delta t \le \frac{\lambda}{2c}\Delta x$, where $c=1/\sqrt{\epsilon_0\mu_0}$ is the speed of light and $\lambda$ is the stability limit provided in Table \ref{tab:p4}.

The total energy at $t=n\Delta t$ is given by
\beq \label{eq:energy_Maxwell}
\mathcal{E}_n = \frac{1}{2} \left(\varepsilon_0 ||\vec{E}^n||^2 + \mu_0 ||\vec{H}^n||^2 \right). 
\eeq							
Note that if we define the magnetic field at time $(n+1/2)\Delta t$, and discretize the equations (\ref{eq:semi_E})-(\ref{eq:semi_H}) using the leapfrog algorithm, we obtained the standard Yee Finite-Difference Time-Domain (FDTD) method \cite{Yee66,Taflove75,Taflove05}. 
Different from the energy defined in (\ref{eq:energy_Maxwell}), the FDTD method preserves the following numerical energy:  
\beq \label{eq:energy_FDTD}
\mathcal{\tilde{E}}_n = \frac{1}{2} \left(\varepsilon_0 ||\vec{E}^n||^2 + \mu_0 \innerproduct{\vec{H}^{n-1/2}}{\vec{H}^{n+1/2}} \right). 
\eeq	

In our simulations, the computational domain is $x\in[-5,5]$ with PEC boundary conditions.
The initial condition is a Gaussian pulse defined as 
\beq \label{eq:initial_pulse}
E(0,x) = \phi(x) = e^{-5x^2}\sin\left(\frac{2\pi}{\lambda_0} x\right), 
\eeq
with a carrier wavelength of $\lambda_0 = 0.2$.  
The final time is set to $T=10^{-8}$ to ensure that the pulse does not reach the boundaries. 
The exact solution is obtained using the d'Alembert formula for traveling waves: $E(t,x) = 0.5(\phi(x+ct) + \phi(x-ct))$. 

The simulation results are summarized in Table \ref{tab:Maxwell}. 
All four RK methods exhibit similar accuracy, with convergence rates of the solutions being second-order due to the spatial discretization being second-order. 
	Table \ref{tab:Maxwellcpu} compares the computational performance of four fourth-order RK methods 
	required to achieve a target energy accuracy of $|\epsilon_E| < 5 \times 10^{-10}$.
	As the number of stages in the RK method increases, the computational efficiency regarding energy accuracy improves dramatically. 
	Specifically, the seven-stage method RK(7,4,11) permits a 21 times reduction in the number of spatial nodes ($N_x$) 
	and a 30 times reduction in time steps ($N_t$) compared to the four-stage RK(4,4,5) method. 
	This results in a simulation speedup of over 240 times.

%To assess computational efficiency, we compare against the FDTD method. 
%For FDTD, we choose a Courant number of 1, so $\Delta t = \frac{\Delta x}{c}$. 
%As shown in Table \ref{tab:Maxwellcpu}, the fourth-order RK methods achieve better accuracy and run faster than the fine mesh FDTD result.
%The RK methods do not significantly increase the computational cost, partly because they use larger $\Delta t$ (equivalently, smaller $N_t$).
%In this simulation, both the six- and seven-stage methods achieve energy accuracy of machine precision at the final time when the grid size is $N_x=16,000$.

%To compare the energy dissipation over a long time, we run the simulations for 100,000 iterations with the same Courant number of 0.5 ($\Delta t = 0.5\frac{\Delta x}{c}$) for all methods. The computational mesh size is fixed at $N_x=1000$, resulting in a resolution of 20 cells per wavelength ($\Delta x = \frac{\lambda_0}{20}$). The results are shown in Figure \ref{fig:energy_Maxwell}. At the end of the simulation ($n_t=100,000$), the  energy dissipation decreases by approximately 3 orders of magnitude as $s$ increases by one. Specifically, the energy dissipation of the RK(4,4,5) method is about 0.02, compared to $4.7\times10^{-12}$ for the RK(7,4,11) method. This demonstrates that the energy accuracy can be significantly enhanced by employing superconvergent methods.

\begin{table}																								
\caption 	{Solution and energy convergence rates for 1D Maxwell's equations using fourth-order RK methods. }		
\label{tab:Maxwell}																									
\begin{tabular}																								
	{ c		c		c		c		c		c		c		c		c		c	}	\hline\noalign{\smallskip}				
	Method	&	$N_x$	&	$\epsilon_1$	&	Ratio	&	$\epsilon_2$	&	Ratio	&	$\epsilon_\infty$	&	Ratio	&	$\epsilon_E$	&	Ratio	\\	\hline\noalign{\smallskip}				
%	FDTD	&	2000	&	2.83E-03	&		&	7.87E-03	&		&	3.94E-02	&		&	$\sim10^{-15}$	&		\\					
%	&	4000	&	1.42E-03	&	1.00	&	3.93E-03	&	1.00	&	1.96E-02	&	1.00	&	$\sim10^{-15}$	&	-	\\					
%	&	8000	&	7.08E-04	&	1.00	&	1.96E-03	&	1.00	&	9.82E-03	&	1.00	&	$\sim10^{-15}$	&	-	\\					
%	&	16000	&	3.54E-04	&	1.00	&	9.82E-04	&	1.00	&	4.91E-03	&	1.00	&	$\sim10^{-15}$	&	-	\\					
%	&	32000	&	1.77E-04	&	1.00	&	4.91E-04	&	1.00	&	2.45E-03	&	1.00	&	$\sim10^{-15}$	&	-	\\	\hline\noalign{\smallskip}				
	RK(4,4,5)	&	2000	&	3.84E-03	&		&	1.06E-02	&		&	5.24E-02	&		&	-8.07E-04	&		\\					
	&	4000	&	9.46E-04	&	2.02	&	2.61E-03	&	2.02	&	1.29E-02	&	2.02	&	-2.55E-05	&	4.98	\\					
	&	8000	&	2.36E-04	&	2.01	&	6.49E-04	&	2.01	&	3.21E-03	&	2.01	&	-7.99E-07	&	5.00	\\					
	&	16000	&	5.89E-05	&	2.00	&	1.62E-04	&	2.00	&	8.03E-04	&	2.00	&	-2.50E-08	&	5.00	\\					
	&	32000	&	1.47E-05	&	2.00	&	4.05E-05	&	2.00	&	2.01E-04	&	2.00	&	-7.81E-10	&	5.00	\\	\hline\noalign{\smallskip}				
	RK(5,4,7)	&	2000	&	3.79E-03	&		&	1.04E-02	&		&	5.17E-02	&		&	-7.72E-06	&		\\					
	&	4000	&	9.43E-04	&	2.01	&	2.59E-03	&	2.01	&	1.29E-02	&	2.01	&	-6.11E-08	&	6.98	\\					
	&	8000	&	2.36E-04	&	2.00	&	6.48E-04	&	2.00	&	3.21E-03	&	2.00	&	-4.79E-10	&	6.99	\\					
	&	16000	&	5.88E-05	&	2.00	&	1.62E-04	&	2.00	&	8.02E-04	&	2.00	&	-3.74E-12	&	7.00	\\					
	&	32000	&	1.47E-05	&	2.00	&	4.05E-05	&	2.00	&	2.01E-04	&	2.00	&	-3.16E-14	&	6.89	\\	\hline\noalign{\smallskip}				
	RK(6,4,9)	&	2000	&	3.79E-03	&		&	1.04E-02	&		&	5.16E-02	&		&	-3.55E-08	&		\\					
	&	4000	&	9.42E-04	&	2.01	&	2.59E-03	&	2.01	&	1.28E-02	&	2.01	&	-7.03E-11	&	8.98	\\					
	&	8000	&	2.35E-04	&	2.00	&	6.48E-04	&	2.00	&	3.21E-03	&	2.00	&	-1.39E-13	&	8.99	\\					
	&	16000	&	5.88E-05	&	2.00	&	1.62E-04	&	2.00	&	8.02E-04	&	2.00	&	$\sim10^{-15}$	&	-	\\					
	&	32000	&	1.47E-05	&	2.00	&	4.05E-05	&	2.00	&	2.01E-04	&	2.00	&	$\sim10^{-15}$	&	-	\\	\hline\noalign{\smallskip}				
	RK(7,4,11)	&	2000	&	3.77E-03	&		&	1.04E-02	&		&	5.14E-02	&		&	-6.12E-11	&		\\					
	&	4000	&	9.42E-04	&	2.00	&	2.59E-03	&	2.00	&	1.29E-02	&	2.00	&	-3.01E-14	&	10.99	\\					
	&	8000	&	2.35E-04	&	2.00	&	6.48E-04	&	2.00	&	3.21E-03	&	2.00	&	$\sim10^{-15}$	&	-	\\					
	&	16000	&	5.88E-05	&	2.00	&	1.62E-04	&	2.00	&	8.02E-04	&	2.00	&	$\sim10^{-15}$	&	-	\\					
	&	32000	&	1.47E-05	&	2.00	&	4.05E-05	&	2.00	&	2.01E-04	&	2.00	&	$\sim10^{-15}$	&	-	\\	\hline				
\end{tabular}																								
\end{table}																									

\begin{table}																			
		\begin{center}										
			\caption{CPU runtime comparison of four fourth-order RK methods achieving a target energy accuracy of 
				$|\epsilon_E| < 5\times10^{-10}$ for Maxwell's equations integrated up to $T=10^{-8}$.}										
			\label{tab:Maxwellcpu}										
			\begin{tabular}{lrrcc}										
				\hline \noalign{\smallskip}										
				method 		&	 $N_x$	&	$N_t$	&$\epsilon_E$	&	CPU time (s) 	\\ \hline\noalign{\smallskip}
				RK(4,4,5)	&	35000	&	7419	&	-4.99E-10	&	8.45		\\ \noalign{\smallskip}
				RK(5,4,7)	&	8000	&	1385	&	-4.79E-10	&	0.56		\\ \noalign{\smallskip}
				RK(6,4,9)	&	3240	&	502		&	-4.68E-10	&	0.14		\\ \noalign{\smallskip}
				RK(7,4,11) 	&	1660	&	249		&	-4.70E-10	&	0.03		\\ \hline
			\end{tabular}										
		\end{center}																		
\end{table}

%\begin{figure}
% \includegraphics[width=0.75\textwidth]{fig_energy_decay_Maxwell1.eps}
% \caption{Time history plots (in log-log scale) of the magnitude of the relative energy deviation for the solution to 1D Maxwell's equations using several fourth-order RK methods. $\Delta x=\frac{\lambda_0}{20}$. The Courant number is fixed at 0.5. Each simulation runs for 100,000 iterations.}
% \label{fig:energy_Maxwell}
%\end{figure}		

%%%%%%%%%%%%%%%%%%%%%%%%%%%%%%%%%%%%
\section{ Nonlinear examples}
\label{sec:nonlinear_examples}

In this section, we first evaluate the performance of the proposed RK325 method using a variety of skew-symmetric systems, including 
Euler's equations for rigid body dynamics, the nonlinear Schr\"odinger, the KdV, the Landau--Lifshitz, and Burgers' equations.
We compare the RK325 method with RK4 and the third-order strong stability-preserving RK (SSPRK3) method \cite{gottlieb1998total,gottlieb2001strong}.

Next, we test the proposed RK427 and RK547 methods, which feature seventh-order energy accuracy, 
using a nonlinear Hamiltonian oscillator with cubic nonlinearity and an amplitude-dependent frequency. 
In the final example, we apply the RK547 method to the Maxwell--Kerr system and compare it with RK4.

\subsection{Euler's equations for rigid body dynamics}
In the first example, we consider Euler's equations for rigid body dynamics: 
\begin{align}
	I_1 w_1' &= \left( I_2 - I_3 \right) w_2 w_3 + M_1, \\
	I_2 w_2' &= \left( I_3 - I_1 \right) w_3 w_1 + M_2, \\
	I_3 w_3' &= \left( I_1 - I_2 \right) w_1 w_2 + M_3,
\end{align}
where $w=(w_1,w_2,w_3)$, $I=(I_1,I_2,I_3)$, and $M=(M_1,M_2,M_3)$ represent 
the angular velocity, the principal moment of inertia, and the external torque, respectively.
In our simulations, we consider the case of no torque ($M=0$), 
so that the angular momentum magnitude $L$ and the rotational kinetic energy $E$ are conserved quantities:
\begin{equation}
	L = \sqrt{(I_1 w_1)^2+(I_2 w_2)^2+(I_3 w_3)^2},
\end{equation}
\begin{equation}
	E = \frac{1}{2}(I_1 w_1^2+I_2 w_2^2+I_3 w_3^2).
\end{equation}

Let $I=(0.5,1,2)$, $t\in[0,10]$, and initial condition to be $w(0)=(1,1,1)$. 
The solution obtained using RK4 with $\Delta t = 0.005$ is used as the reference solution when computing solution convergence rates. 
Momentum and energy errors are calculated as $\epsilon_L= |L(T)-L(0)|$ and $\epsilon_E=|E(T)-E(0)|$, where $T$ is the final time.
Simulation results are shown in Table \ref{tab:Euler} and Fig. \ref{fig:Euler}, 
which demonstrate that the RK325 method achieves fifth-order for both momentum and energy, the same as RK4.

\begin{table}	
	\begin{center}														
		\caption{Solution, momentum, and energy convergence rates for	Euler's equations for rigid body dynamics.		}								
		\label	{tab:Euler}														
		\begin{tabular}{cccccccc}															
			\hline \noalign{\smallskip}															
			Method	&	$\Delta t$	&	$\epsilon_2$	&	Rate	&	$\epsilon_L$	&	Rate	&	$\epsilon_E$	&	Rate	\\
			\hline \noalign{\smallskip}															
	SSPRK3	&	0.400	&	3.05E-01	&		&	9.01E-02	&		&	2.46E-01	&		\\
			&	0.200	&	4.17E-02	&	2.87	&	1.53E-02	&	2.56	&	4.19E-02	&	2.55	\\
			&	0.100	&	5.37E-03	&	2.96	&	2.03E-03	&	2.91	&	5.56E-03	&	2.91	\\
			&	0.050	&	6.79E-04	&	2.98	&	2.57E-04	&	2.98	&	7.04E-04	&	2.98	\\
			&	0.025	&	8.54E-05	&	2.99	&	3.22E-05	&	3.00	&	8.82E-05	&	3.00	\\
			\hline \noalign{\smallskip}															
		RK4	&	0.400	&	5.54E-02	&		&	1.09E-02	&		&	2.90E-02	&		\\
			&	0.200	&	3.37E-03	&	4.04	&	3.64E-04	&	4.91	&	9.83E-04	&	4.88	\\
			&	0.100	&	2.03E-04	&	4.05	&	1.10E-05	&	5.05	&	3.06E-05	&	5.00	\\
			&	0.050	&	1.24E-05	&	4.03	&	3.10E-07	&	5.15	&	9.20E-07	&	5.06	\\
			&	0.025	&	7.64E-07	&	4.02	&	7.49E-09	&	5.37	&	2.62E-08	&	5.13	\\
			\hline \noalign{\smallskip}															
	RK325	&	0.400	&	5.94E-01	&		&	1.78E-02	&		&	4.32E-02	&		\\
			&	0.200	&	1.29E-01	&	2.21	&	5.38E-04	&	5.05	&	1.31E-03	&	5.05	\\
			&	0.100	&	3.11E-02	&	2.05	&	1.68E-05	&	5.00	&	4.09E-05	&	5.00	\\
			&	0.050	&	7.71E-03	&	2.01	&	5.27E-07	&	5.00	&	1.28E-06	&	5.00	\\
			&	0.025	&	1.92E-03	&	2.00	&	1.65E-08	&	5.00	&	4.00E-08	&	5.00	\\
			\hline \noalign{\smallskip}															
		\end{tabular}	
	\end{center}												
\end{table}															

\begin{figure}[h]
	\begin{center}
		\subfigure[]{ \includegraphics[width=0.45\textwidth]{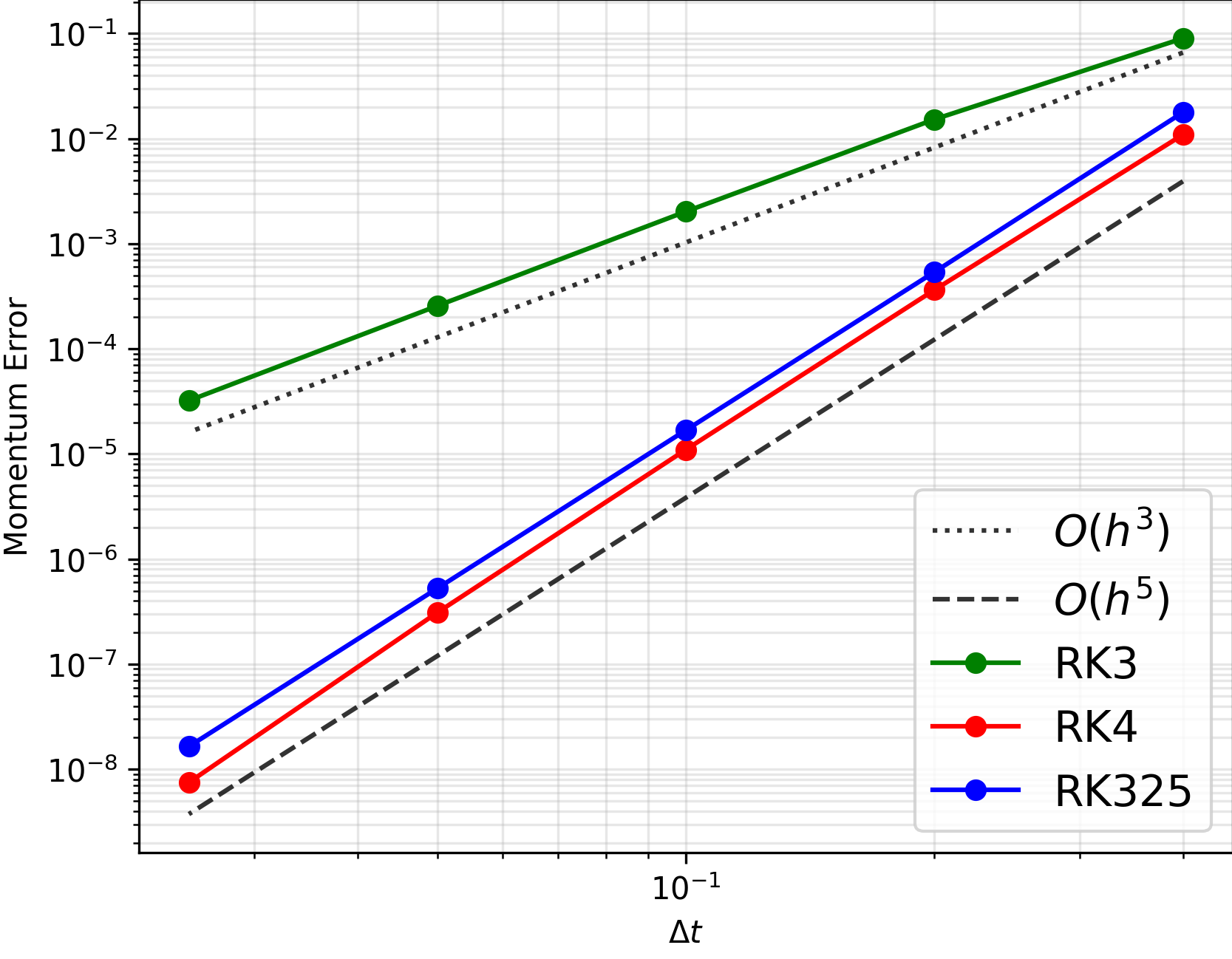} }
		\subfigure[]{ \includegraphics[width=0.45\textwidth]{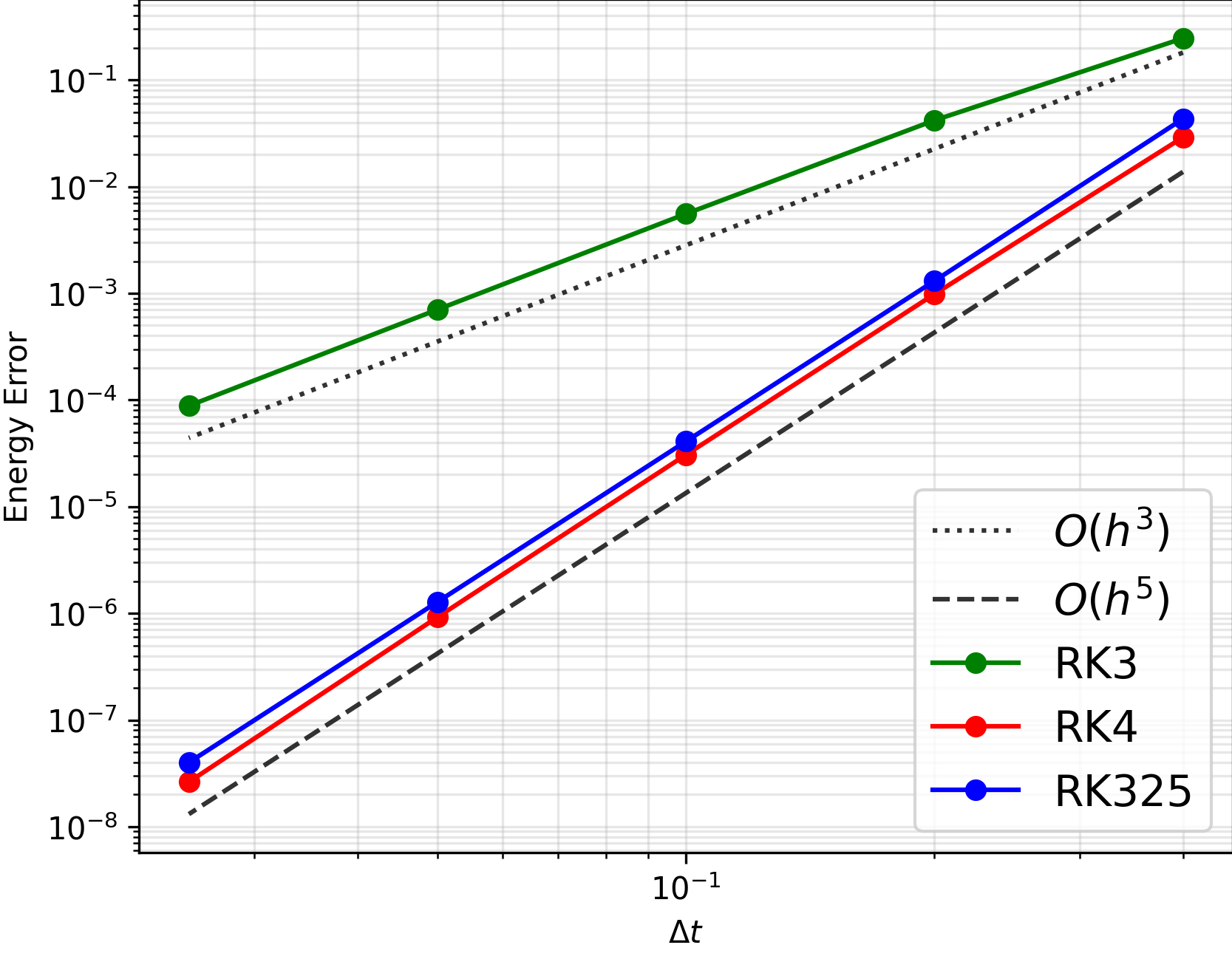} }
		\caption{Comparison of (a) momentum and (b) energy convergence rates for Euler's equation for rigid body dynamics.}
		\label{fig:Euler}
	\end{center}
\end{figure}

\subsection{Nonlinear Schr\"odinger equation}

In this example, we consider the nonlinear Schr\"odinger equation (NLSE) 
\begin{equation}
	i \phi_t = -\phi_{xx} - |\phi|^2 \phi,
\end{equation}
with initial condition $\phi(t=0,x)=\text{sech}(x)$.
The mass is given by
\begin{equation}
	m(t) = \int_a^b{|\phi(t,x)|^2 dx} 
\end{equation}
and the discrete mass is computed as 
\begin{equation}
	m(t) = \sum_j{|\phi_j^n|^2 \Delta x}, 
\end{equation}
where $\phi_j^n$ represents the numerical solution at $t_n=n \Delta t$ and $x=x_j$.
The computational domain is $x \in [-20,20]$ and $t \in [0,5]$.
The term $\phi_{xx}$ is discretized using standard centered finite difference with periodic boundary conditions.
The spatial mesh size is fixed at $N_x=100$, so $\Delta x=0.4$.
The solution obtained using RK4 with $\Delta t = 1\times10^{-4}$ is used as the reference solution when computing convergence rates.
Mass error is computed as $\epsilon_m = |m(T) - m(0)|$.
Simulation results are shown in Table \ref{tab:NLSE} and Fig.~\ref{fig:NLSE},  
which demonstrate that the RK325 method achieves fifth-order energy accuracy, the same as RK4.

\begin{table}	
	\begin{center}																				
		\caption{Solution and mass convergence rates of NLSE.}																				
		\label{tab:NLSE}																				
		\begin{tabular}{cccccccccc}																				
			\hline \noalign{\smallskip}																				
			Method	&	$\Delta t$	&	$\epsilon_1$	&	Rate	&	$\epsilon_2$	&	Rate	&	$\epsilon_\infty$	&	Rate	&	$\epsilon_m$	&	Rate	\\	
			\hline \noalign{\smallskip}																				
	SSPRK3	&	0.0400	&	2.23E-03	&		&	5.94E-04	&		&	2.39E-04	&		&	2.34E-05	&		\\	
			&	0.0200	&	3.19E-04	&	2.80	&	8.12E-05	&	2.87	&	3.28E-05	&	2.86	&	3.03E-06	&	2.95	\\	
			&	0.0100	&	4.12E-05	&	2.95	&	1.03E-05	&	2.98	&	4.22E-06	&	2.96	&	3.82E-07	&	2.99	\\	
			&	0.0050	&	5.18E-06	&	2.99	&	1.29E-06	&	3.00	&	5.32E-07	&	2.99	&	4.78E-08	&	3.00	\\	
			&	0.0025	&	6.48E-07	&	3.00	&	1.62E-07	&	3.00	&	6.66E-08	&	3.00	&	5.97E-09	&	3.00	\\	
			\hline \noalign{\smallskip}																				
		RK4	&	0.0400	&	2.40E-04	&		&	5.80E-05	&		&	2.64E-05	&		&	2.15E-07	&		\\	
			&	0.0200	&	1.61E-05	&	3.90	&	3.74E-06	&	3.95	&	1.75E-06	&	3.92	&	6.66E-09	&	5.02	\\	
			&	0.0100	&	1.01E-06	&	3.99	&	2.34E-07	&	4.00	&	1.10E-07	&	3.99	&	1.97E-10	&	5.08	\\	
			&	0.0050	&	6.32E-08	&	4.00	&	1.46E-08	&	4.00	&	6.87E-09	&	4.00	&	5.40E-12	&	5.19	\\	
			&	0.0025	&	3.95E-09	&	4.00	&	9.16E-10	&	4.00	&	4.30E-10	&	4.00	&	1.22E-13	&	5.47	\\	
			\hline \noalign{\smallskip}																				
	RK325	&	0.0400	&	7.49E-03	&		&	1.93E-03	&		&	7.93E-04	&		&	2.57E-07	&		\\	
			&	0.0200	&	1.82E-03	&	2.04	&	4.74E-04	&	2.03	&	1.93E-04	&	2.04	&	7.96E-09	&	5.01	\\	
			&	0.0100	&	4.49E-04	&	2.02	&	1.18E-04	&	2.01	&	4.83E-05	&	2.00	&	2.49E-10	&	5.00	\\	
			&	0.0050	&	1.12E-04	&	2.00	&	2.94E-05	&	2.00	&	1.21E-05	&	2.00	&	7.77E-12	&	5.00	\\	
			&	0.0025	&	2.80E-05	&	2.00	&	7.34E-06	&	2.00	&	3.01E-06	&	2.00	&	2.42E-13	&	5.00	\\	
			\hline \noalign{\smallskip}																				
		\end{tabular}
	\end{center}																			
\end{table}																

\begin{figure}[h]
	\begin{center}
		\includegraphics[width=0.5\textwidth]{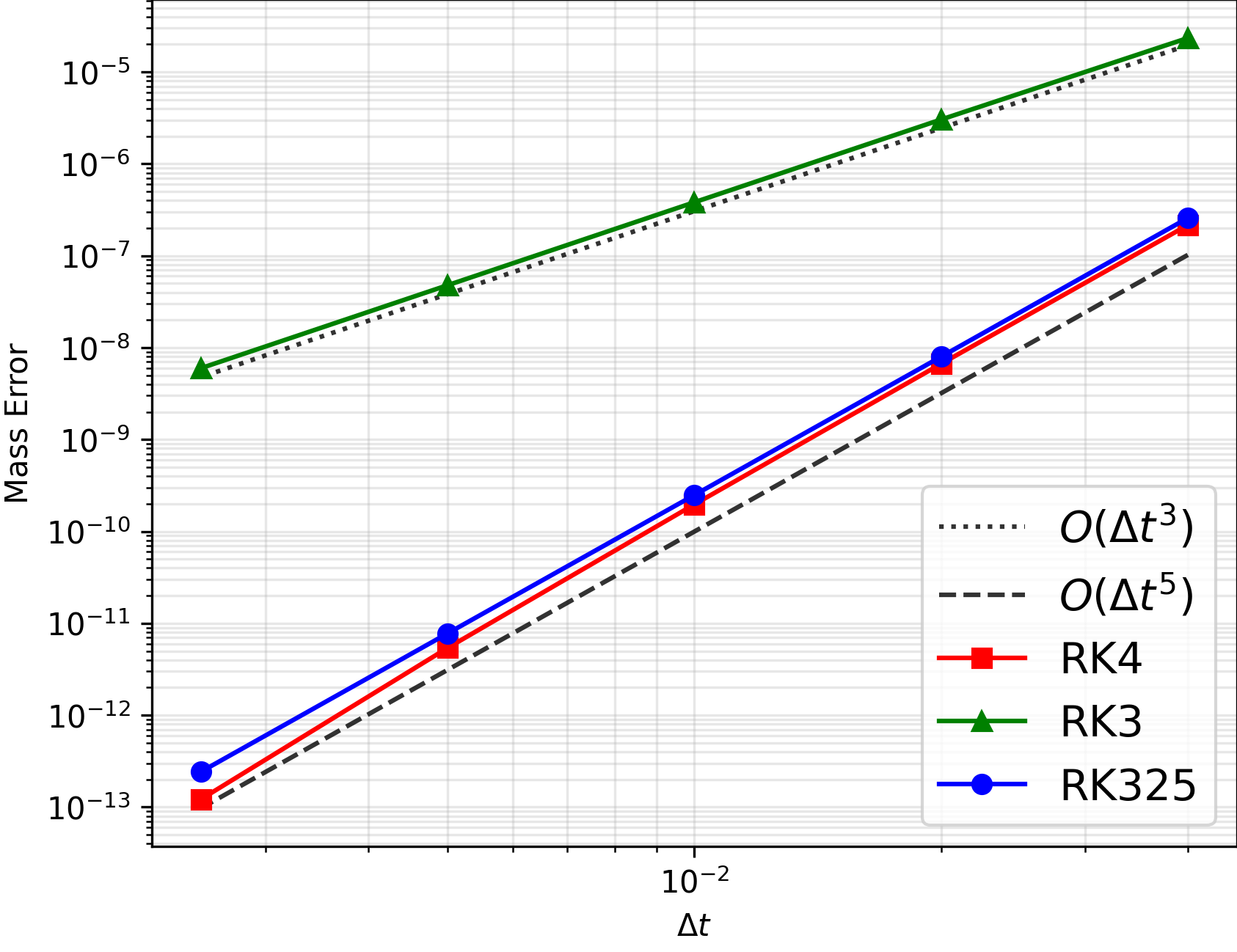} 	
		\caption{Comparison of mass convergence rates for NLSE.}
		\label{fig:NLSE}
	\end{center}
\end{figure}

\subsection{KdV equation}

In this example, we consider the Korteweg--De Vries (KdV) equation
\begin{equation}
	u_t + 6 u u_x + u_{xxx}= 0, 
\end{equation}
with initial condition $u(t=0,x)=2sech^2(x+4)$.
We use the skew-symmetric discretization for the convective term:
\begin{equation}
	6 u u_x = 2 uu_x + 2 (u^2)_x,
\end{equation}
which leads to the following semi-discretized equation:
\begin{equation}
	\frac{\partial u_j}{\partial t} = -2 \left( u_j \frac{u_{j+1} - u_{j-1}}{2 \Delta x} + \frac{u_{j+1}^2 - u_{j-1}^2}{2 \Delta x} \right)
	- \frac{u_{j+2} - 2 u_{j+1} + 2 u_{j-1} - u_{j-2}}{2 \Delta x^3}.
\end{equation}
The computational domain is $x\in[-10,10]$ with periodic boundary conditions. 
The mesh size is fixed at $N_x=50$. 
The final time is 1.
%The energy of the Burgers' equation is defined as $E(t) = \int u^2(t,x) dx$. 
The energy is computed as $\mathcal{E} = \frac{1}{2}\sum{u_j^2}$.
The solution obtained using RK4 with $\Delta t = 0.0005$ is used as the reference solution when computing convergence rates.
Simulation results are shown in Table \ref{tab:KdV} and Fig. \ref{fig:KdV}, 
which demonstrate that the RK325 method achieves fifth-order energy accuracy.

\begin{table}	
	\begin{center}																			
		\caption{Solution and energy convergence rates of 					KdV equation. }												
		\label	{tab:KdV}																													
		\begin{tabular}{cccccccccc}																			
			\hline \noalign{\smallskip}																			
			Method	&	$\Delta t$	&	$\epsilon_1$	&	Rate	&	$\epsilon_2$	&	Rate	&	$\epsilon_\infty$	&	Rate	&	$\epsilon_E$	&	Rate	\\
			\hline \noalign{\smallskip}																			
	SSPRK3	&	0.001000	&	1.87E-04	&		&	5.09E-05	&		&	2.82E-05	&		&	2.67E-07	&		\\
			&	0.000500	&	8.72E-05	&	1.10	&	2.40E-05	&	1.08	&	1.58E-05	&	0.83	&	3.47E-08	&	2.94	\\
			&	0.000250	&	1.51E-05	&	2.53	&	4.22E-06	&	2.51	&	2.78E-06	&	2.51	&	4.49E-09	&	2.95	\\
			&	0.000125	&	1.97E-06	&	2.94	&	5.54E-07	&	2.93	&	3.64E-07	&	2.93	&	5.65E-10	&	2.99	\\
			
			\hline \noalign{\smallskip}																			
		RK4	&	0.001000	&	9.99E-05	&		&	2.79E-05	&		&	1.80E-05	&		&	1.05E-09	&		\\
			&	0.000500	&	7.87E-06	&	3.67	&	2.27E-06	&	3.62	&	1.45E-06	&	3.63	&	4.98E-11	&	4.39	\\
			&	0.000250	&	4.96E-07	&	3.99	&	1.43E-07	&	3.99	&	8.66E-08	&	4.07	&	1.60E-12	&	4.96	\\
			&	0.000125	&	3.10E-08	&	4.00	&	8.91E-09	&	4.00	&	5.22E-09	&	4.05	&	9.41E-14	&	4.08	\\
			
			\hline \noalign{\smallskip}																			
	RK325	&	0.001000	&	4.22E-04	&		&	1.19E-04	&		&	6.44E-05	&		&	3.44E-09	&		\\
			&	0.000500	&	3.09E-04	&	0.45	&	8.46E-05	&	0.50	&	5.07E-05	&	0.35	&	5.86E-11	&	5.88	\\
			&	0.000250	&	1.02E-04	&	1.60	&	2.81E-05	&	1.59	&	1.66E-05	&	1.61	&	1.81E-12	&	5.02	\\
			&	0.000125	&	2.58E-05	&	1.99	&	7.13E-06	&	1.98	&	3.97E-06	&	2.07	&	5.68E-14	&	4.99	\\
			
			\hline \noalign{\smallskip}																			
		\end{tabular}
	\end{center}																			
\end{table}																			

\begin{figure}[h]	
	\begin{center}
		\includegraphics[width=0.5\textwidth]{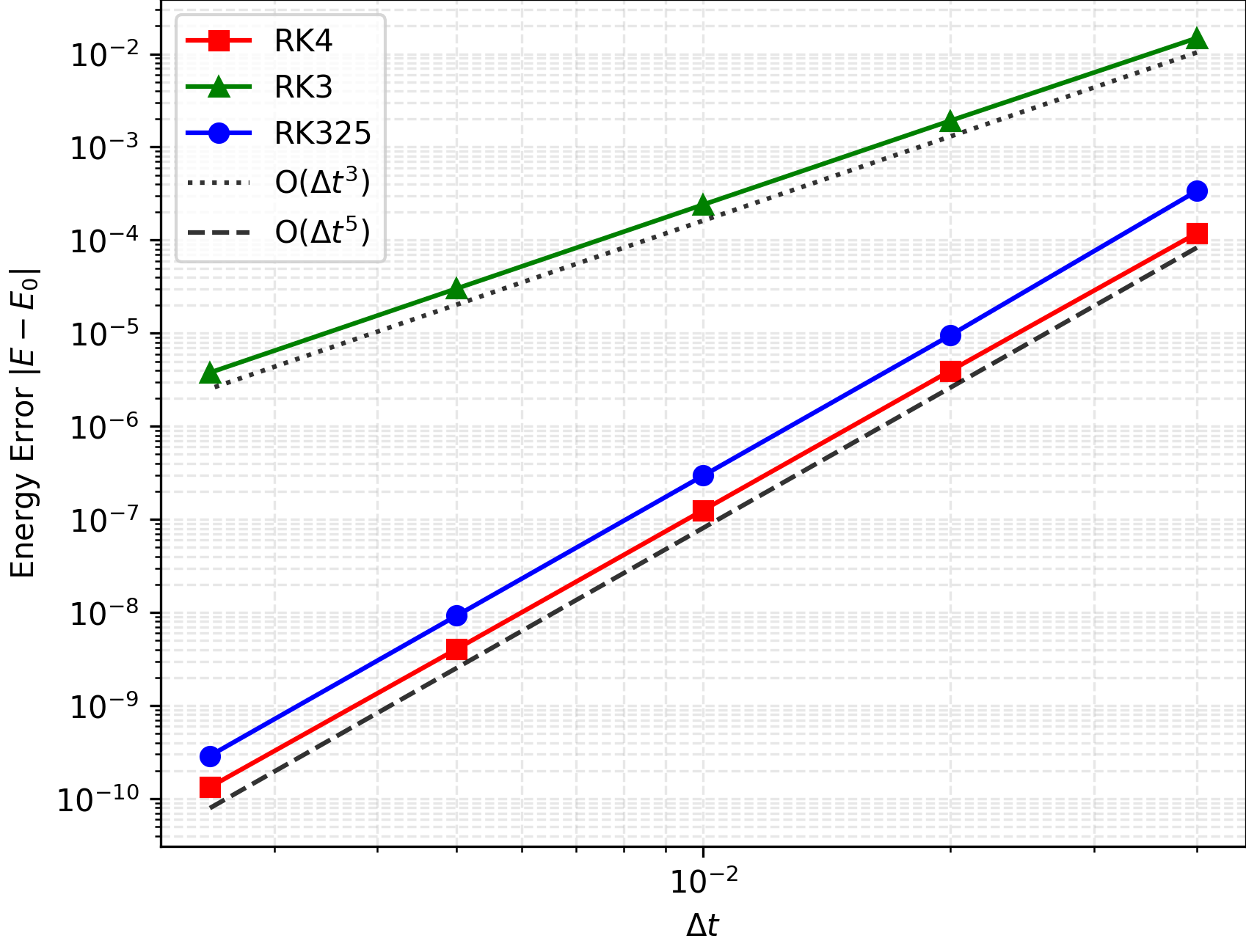} 	
		\caption{Solution and energy convergence rates for KdV equation.}
		\label{fig:KdV}
	\end{center}
\end{figure}

\subsection{Burgers' equation}

In this example, we consider inviscid Burgers' equation 
\begin{equation}
	u_t + u u_x = 0,
\end{equation}
with initial condition $u(t=0,x)=sin(x)$.
We use the skew-symmetric discretization for the convective term:
\begin{equation}
	u u_x = \frac{1}{3} uu_x + \frac{1}{3}(u^2)_x,
\end{equation}
which leads to the following semi-discretized equation:
\begin{equation}
	\frac{\partial u_j}{\partial t} = -\frac{1}{3} \left( u_j \frac{u_{j+1} - u_{j-1}}{2 \Delta x} + \frac{u_{j+1}^2 - u_{j-1}^2}{2 \Delta x} \right).
\end{equation}
%The energy of the Burgers' equation is defined as $E(t) = \frac{1}{2} \int u^2(t,x) dx$.
The energy is computed as $\mathcal{E} = \frac{1}{2}\sum{u_j^2}$.
The computational domain is $x\in[0,2\pi]$ with periodic boundary conditions, 
and the simulation is performed for $0\le t \le 1$ using a spatial mesh size of $N_x=200$. 
The solution obtained using RK4 with $\Delta t = 1\times10^{-4}$ serves as the reference solution for computing solution convergence rates.
Simulation results are presented in Table \ref{tab:Burgers} and Fig. \ref{fig:Burgers}. 
It can be observed that the RK325 method maintains fifth-order energy accuracy, whereas the accuracy of RK4 is reduced to fourth order.

\begin{table}																			
	\begin{center}																			
		\caption{Solution and energy convergence rates of 					Burgers equation.		}												
		\label	{tab:Burgers}																		
		\begin{tabular}{cccccccccc}																			
			\hline \noalign{\smallskip}																			
			Method	&	$\Delta t$	&	$\epsilon_1$	&	Rate	&	$\epsilon_2$	&	Rate	&	$\epsilon_\infty$	&	Rate	&	$\epsilon_E$	&	Rate	\\
			\hline \noalign{\smallskip}																			
		RK3	&	0.02000	&	1.97E-05	&		&	3.56E-05	&		&	1.07E-04	&		&	1.60E-07	&		\\
			&	0.01000	&	2.39E-06	&	3.04	&	4.34E-06	&	3.03	&	1.31E-05	&	3.04	&	2.04E-08	&	2.98	\\
			&	0.00500	&	2.91E-07	&	3.03	&	5.35E-07	&	3.02	&	1.60E-06	&	3.03	&	2.47E-09	&	3.04	\\
			&	0.00250	&	3.60E-08	&	3.02	&	6.64E-08	&	3.01	&	1.96E-07	&	3.02	&	3.01E-10	&	3.04	\\
			&	0.00125	&	4.47E-09	&	3.01	&	8.27E-09	&	3.01	&	2.44E-08	&	3.01	&	3.70E-11	&	3.02	\\
			\hline \noalign{\smallskip}																			
		RK4	&	0.02000	&	1.13E-06	&		&	2.57E-06	&		&	8.58E-06	&		&	8.80E-08	&		\\
			&	0.01000	&	7.51E-08	&	3.92	&	1.61E-07	&	4.00	&	5.26E-07	&	4.03	&	5.11E-09	&	4.11	\\
			&	0.00500	&	4.88E-09	&	3.94	&	1.00E-08	&	4.00	&	3.23E-08	&	4.03	&	3.06E-10	&	4.06	\\
			&	0.00250	&	3.10E-10	&	3.98	&	6.26E-10	&	4.00	&	1.99E-09	&	4.02	&	1.87E-11	&	4.03	\\
			&	0.00125	&	1.96E-11	&	3.98	&	3.91E-11	&	4.00	&	1.24E-10	&	4.01	&	1.16E-12	&	4.02	\\
			\hline \noalign{\smallskip}																			
	RK325	&	0.02000	&	1.18E-04	&		&	1.95E-04	&		&	7.56E-04	&		&	2.71E-08	&		\\
			&	0.01000	&	2.93E-05	&	2.01	&	4.81E-05	&	2.02	&	1.86E-04	&	2.02	&	8.60E-10	&	4.98	\\
			&	0.00500	&	7.31E-06	&	2.00	&	1.20E-05	&	2.00	&	4.65E-05	&	2.00	&	2.71E-11	&	4.99	\\
			&	0.00250	&	1.83E-06	&	2.00	&	2.99E-06	&	2.00	&	1.16E-05	&	2.00	&	8.48E-13	&	5.00	\\
			&	0.00125	&	4.56E-07	&	2.00	&	7.48E-07	&	2.00	&	2.90E-06	&	2.00	&	2.66E-14	&	4.99	\\
			\hline \noalign{\smallskip}																			
		\end{tabular}																			
	\end{center}																		
\end{table}																			

\begin{figure}[h]
	\begin{center}
		\includegraphics[width=0.5\textwidth]{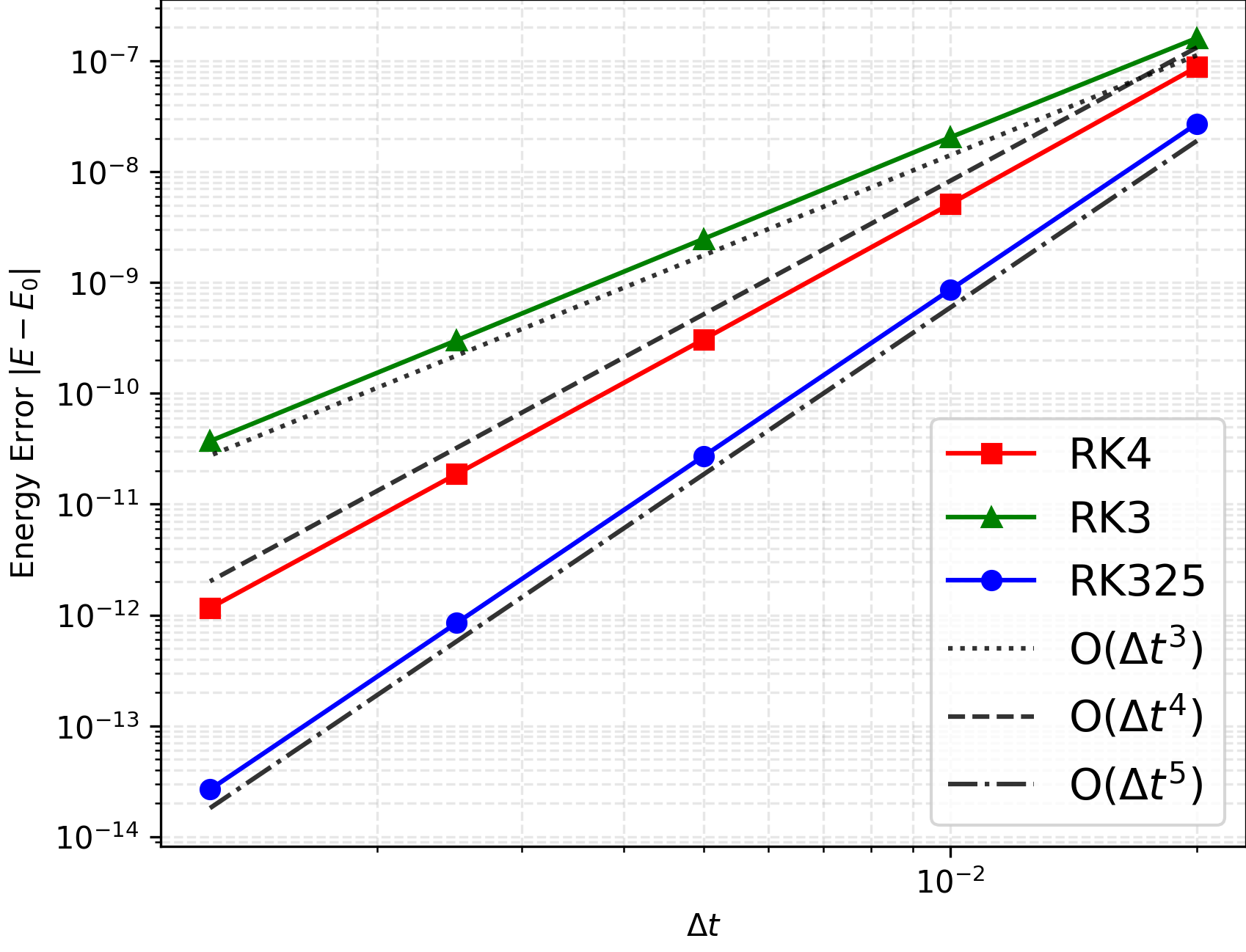} 	
		\caption{Solution and energy convergence rates for Burgers equation.}
		\label{fig:Burgers}
	\end{center}
\end{figure}

\subsection{Landau--Lifshitz equation}																			

In this example, we consider the Landau--Lifshitz (LL) equation
\begin{equation}
	\frac{dM}{dt} = -\gamma M\times H_{eff} - \frac{\alpha \gamma}{M_s} M\times(M\times H_{eff}),
\end{equation}
where $M$, $M_s$, $\gamma$, $\alpha$, and $H_{eff}$ denote the magnetization vector, the saturation magnetization, 
the gyromagnetic ratio, the damping constant, and the effective magnetic field, respectively.
The magnetization magnitude $|M|$ is a conserved quantity. Here, we test the convergence rate of $|M|^2$. 

In our simulations, we set the parameters to $\gamma=1$, $\alpha=0.1$, $M_s=1$, and $H_{eff}=(0,0,1)$, 
with an initial condition of $M(0)=(1, 0.1, 0)$. 
The simulations run until $t=5$. 
The solution obtained using RK4 with $\Delta t=1/2560$ serves as the reference solution for computing convergence rates.
Simulation results are presented in Table \ref{tab:LL} and Fig. \ref{fig:LL}. 
Similar to the previous Burgers' equation example, the RK325 method maintains fifth-order energy accuracy, 
whereas RK4 drops to fourth order.

\begin{table}																			
	\begin{center}																			
		\caption{Convergence rates of solution and $|M|^2$ for Landau--Lifshitz equation.}												
		\label	{tab:LL}																		
		\begin{tabular}{cccccccccc}																			
			\hline \noalign{\smallskip}																			
			Method	&	$\Delta t$	&	$\epsilon_1$	&	Rate	&	$\epsilon_2$	&	Rate	&	$\epsilon_\infty$	&	Rate	&	$\epsilon(|M|^2)$	&	Rate	\\
			\hline \noalign{\smallskip}																			
	SSPRK3	&	0.10000	&	3.24E-04	&		&	2.00E-04	&		&	1.64E-04	&		&	3.99E-04	&		\\
			&	0.05000	&	4.11E-05	&	2.98	&	2.50E-05	&	3.00	&	2.00E-05	&	3.03	&	4.99E-05	&	3.00	\\
			&	0.02500	&	5.17E-06	&	2.99	&	3.13E-06	&	3.00	&	2.47E-06	&	3.02	&	6.24E-06	&	3.00	\\
			&	0.01250	&	6.49E-07	&	3.00	&	3.91E-07	&	3.00	&	3.07E-07	&	3.01	&	7.80E-07	&	3.00	\\
			&	0.00625	&	8.12E-08	&	3.00	&	4.89E-08	&	3.00	&	3.83E-08	&	3.00	&	9.74E-08	&	3.00	\\
			\hline \noalign{\smallskip}																			
		RK4	&	0.10000	&	5.93E-06	&		&	3.83E-06	&		&	3.29E-06	&		&	2.14E-07	&		\\
			&	0.05000	&	3.70E-07	&	4.00	&	2.38E-07	&	4.00	&	2.01E-07	&	4.03	&	7.24E-09	&	4.89	\\
			&	0.02500	&	2.31E-08	&	4.00	&	1.49E-08	&	4.00	&	1.25E-08	&	4.02	&	1.10E-09	&	2.72	\\
			&	0.01250	&	1.44E-09	&	4.00	&	9.29E-10	&	4.00	&	7.74E-10	&	4.01	&	8.87E-11	&	3.63	\\
			&	0.00625	&	9.00E-11	&	4.00	&	5.80E-11	&	4.00	&	4.82E-11	&	4.00	&	6.17E-12	&	3.85	\\
			\hline \noalign{\smallskip}																			
	RK325	&	0.10000	&	3.05E-03	&		&	1.93E-03	&		&	1.61E-03	&		&	7.59E-07	&		\\
			&	0.05000	&	7.60E-04	&	2.00	&	4.82E-04	&	2.00	&	4.03E-04	&	2.00	&	2.37E-08	&	5.00	\\
			&	0.02500	&	1.90E-04	&	2.00	&	1.20E-04	&	2.00	&	1.01E-04	&	2.00	&	7.40E-10	&	5.00	\\
			&	0.01250	&	4.75E-05	&	2.00	&	3.01E-05	&	2.00	&	2.51E-05	&	2.00	&	2.31E-11	&	5.00	\\
			&	0.00625	&	1.19E-05	&	2.00	&	7.52E-06	&	2.00	&	6.29E-06	&	2.00	&	7.23E-13	&	5.00	\\
			\hline \noalign{\smallskip}																			
		\end{tabular}																			
	\end{center}																	
\end{table}																			

\begin{figure}
	\begin{center}
		\includegraphics[width=0.5\textwidth]{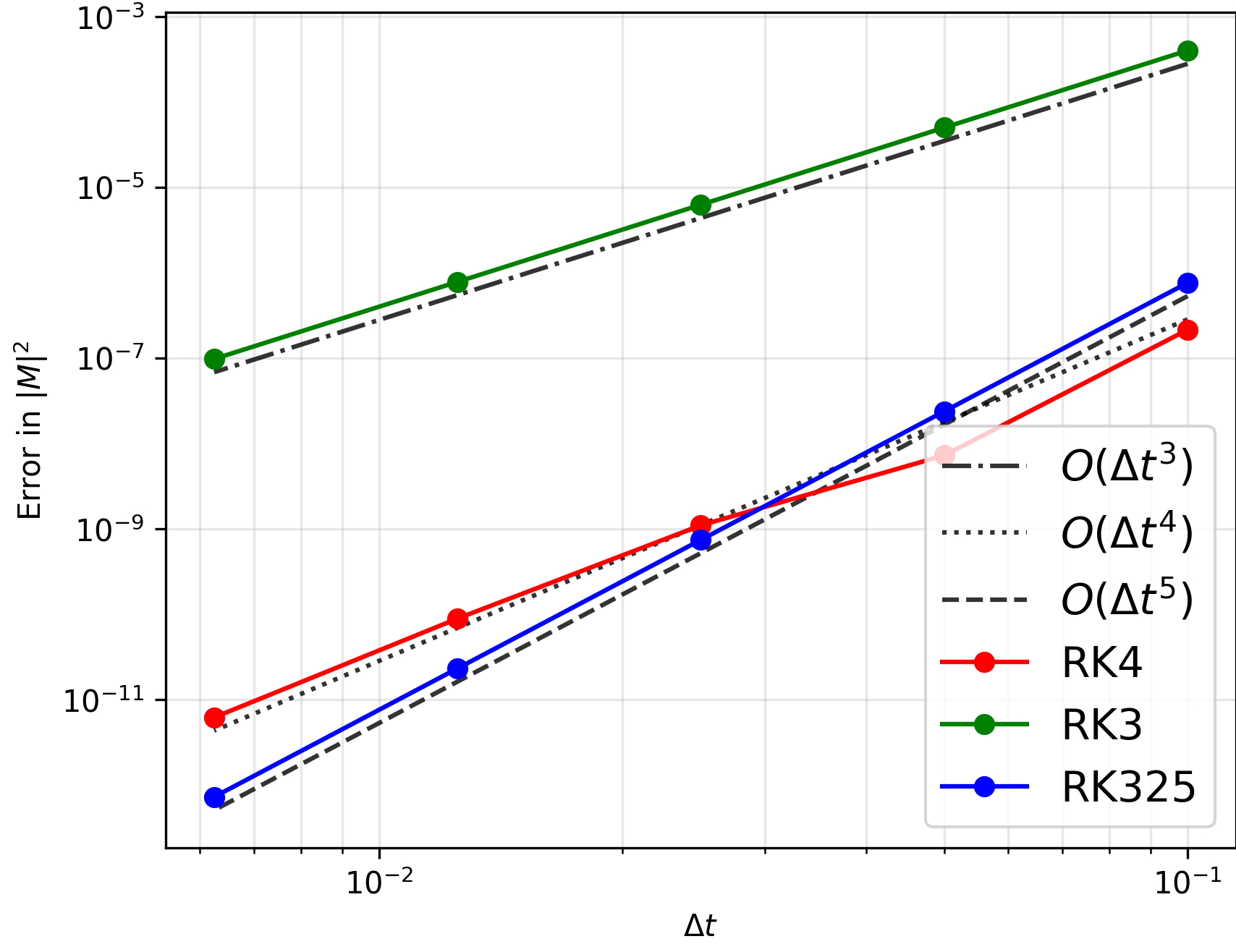} 	
		\caption{Convergence rates of $|M|^2$ for Landau--Lifshitz equation.}
		\label{fig:LL}
	\end{center}
\end{figure}

\subsection{Nonlinear Hamiltonian oscillator with cubic nonlinearity}

In this example, we examine the performance of the proposed seventh-order energy-accurate methods, specifically the RK427 and RK547 methods. 
We consider a nonlinear Hamiltonian oscillator with cubic nonlinearity \cite{guckenheimer2013nonlinear}
\beq
%z' = i\left(1 + \frac{1}{2}|z|^2 \right)z,~ z(0)=1,~0\le t \le 10.
\frac{d}{dt}\vtwo{u}{v} = \mtwo{~0}{~\omega(u,v)}{-\omega(u,v)}{~0} \vtwo{u}{v},
\eeq
where $\omega(u,v)=1 + \frac{1}{2}(u^2+v^2)$, with initial conditions $u(0)=1$, $v(0)=0$, and $0\le t\le 10$.
Energy is given by $\mathcal{E} = u^2+v^2$.
The solution obtained using RK4 with $\Delta t=0.001$ serves as the reference solution for computing convergence rates.

As shown in Table \ref{tab:nho}, the simulation results demonstrate the solution and energy convergence rates of the proposed RK methods. 
In particular, the RK547 method yields a solution error similar to that of RK4, while it outperforms RK4 in terms of energy accuracy.

\begin{table}	
	\begin{center}										
	\caption{Solution and energy convergence rates of nonlinear harmonic oscillators using RK4, RK325, RK427a, RK427b, and RK547.
			$\eps_2$ and $\eps_E$ are the $L_2$ error norm and energy deviation, respectively. }				
	\label	{tab:nho}										
	\begin{tabular}		{llcccc}									
		\hline \noalign{\smallskip}											
Method	&	$\Delta t$	&	$\eps_2$	&	Rate	&	$\eps_E$	&	Rate	\\
		\hline \noalign{\smallskip}											
	RK4	&	0.500000	&	1.82E-01	&			&	4.35E-02	&		\\
		&	0.250000	&	1.01E-02	&	4.17	&	1.37E-03	&	4.99	\\
		&	0.125000	&	5.49E-04	&	4.20	&	4.10E-05	&	5.06	\\
		&	0.062500	&	3.15E-05	&	4.12	&	1.26E-06	&	5.02	\\
		&	0.031250	&	1.87E-06	&	4.07	&	3.93E-08	&	5.00	\\
		&	0.015625	&	1.14E-07	&	4.04	&	1.23E-09	&	5.00	\\
		\hline \noalign{\smallskip}											
RK325	&	0.500000	&	1.22E+00	&			&	1.09E-01	&		\\
		&	0.250000	&	1.98E-01	&	2.62	&	3.02E-03	&	5.17	\\
		&	0.125000	&	4.51E-02	&	2.13	&	9.59E-05	&	4.98	\\
		&	0.062500	&	1.11E-02	&	2.02	&	3.01E-06	&	4.99	\\
		&	0.031250	&	2.75E-03	&	2.01	&	9.43E-08	&	5.00	\\
		&	0.015625	&	6.87E-04	&	2.00	&	2.95E-09	&	5.00	\\
		\hline \noalign{\smallskip}											
RK427a	&	0.500000	&	2.92E-01	&			&	8.45E-02	&		\\
		&	0.250000	&	8.70E-02	&	1.75	&	7.89E-04	&	6.74	\\
		&	0.125000	&	1.91E-02	&	2.19	&	6.12E-06	&	7.01	\\
		&	0.062500	&	4.58E-03	&	2.06	&	4.77E-08	&	7.00	\\
		&	0.031250	&	1.13E-03	&	2.02	&	3.72E-10	&	7.00	\\
		&	0.015625	&	2.82E-04	&	2.00	&	2.91E-12	&	7.00	\\
		\hline \noalign{\smallskip}											
RK427b	&	0.500000	&	3.93E-01	&			&	2.39E-03	&		\\
		&	0.250000	&	8.81E-02	&	2.16	&	1.74E-05	&	7.10	\\
		&	0.125000	&	2.16E-02	&	2.03	&	1.33E-07	&	7.03	\\
		&	0.062500	&	5.37E-03	&	2.01	&	1.03E-09	&	7.01	\\
		&	0.031250	&	1.34E-03	&	2.00	&	8.05E-12	&	7.00	\\
		&	0.015625	&	3.35E-04	&	2.00	&	6.24E-14	&	7.01	\\
		\hline \noalign{\smallskip}											
RK547	&	0.500000	&	1.48E-01	&			&	4.41E-03	&		\\
		&	0.250000	&	7.31E-03	&	4.34	&	4.13E-05	&	6.74	\\
		&	0.125000	&	4.30E-04	&	4.09	&	3.29E-07	&	6.97	\\
		&	0.062500	&	2.65E-05	&	4.02	&	2.58E-09	&	6.99	\\
		&	0.031250	&	1.65E-06	&	4.01	&	2.02E-11	&	7.00	\\
		&	0.015625	&	1.03E-07	&	4.00	&	1.58E-13	&	7.00	\\
		\hline \noalign{\smallskip}											
	\end{tabular}	
	\end{center}									
\end{table}											
			
\subsection{Maxwell--Kerr system}	
	
In the final example, we consider the one-dimensional nonlinear Maxwell--Kerr system for modeling electromagnetic wave propagation in third-order nonlinear media:
\begin{align}
\varepsilon_0 \pd{D_z}{t} &= \pd{H_y}{x}, 						\label{eq:Kerr_D}	\\
		\mu_0 \pd{H_y}{t} &= \pd{E_z}{x},						\label{eq:Kerr_H}	\\
					  D_z &= \varepsilon_0 (E_z + \chi E_z^3). 	\label{eq:Kerr_DE}
\end{align}

Combining Equations (\ref{eq:Kerr_D}) and (\ref{eq:Kerr_DE}) yield
 \begin{equation}
 	\varepsilon_0 (1 + 3\chi E_z^2)\pd{E_z}{t} = \pd{H_y}{x}. \label{eq:Kerr_E}
 \end{equation}
Similar to the linear Maxwell example in the previous section, 
the nonlinear Maxwell--Kerr equations (\ref{eq:Kerr_H}) and (\ref{eq:Kerr_E}) are discretized on a spatially staggered grid using fourth-order centered differences:
\begin{align}
	%(circshift(u(1,:),1) - 27*u(1,:) + 27*circshift(u(1,:),-1) - circshift(u(1,:),-2))/(24*dx*mu0); 
\frac{\partial}{\partial x} E\left(t_n,x_j+\frac{\Delta x}{2}\right) 	
	&\approx \frac{-E_{j+2}^n + 27E_{j+1}^n - 27E_{j}^n + E_{j-1}^n}{24\Delta x},	\label{eq:dEdx4} \\
\frac{\partial}{\partial x} H(t_n,x_j) 					
	&\approx \frac{-H_{j+3/2}^n + 27H_{j+1/2}^n - 27H_{j-1/2}^n + E_{j-3/2}^n}{24\Delta x},	\label{eq:dHdx4}	
\end{align} 
resulting in the following semi-discrete system:  
\beq \label{eq:Kerr}
\pdt \vtwo{\vec{E}}{\vec{H}} = \mtwo{\mathcal{A}/\varepsilon_0}{0}{0}{1/\mu_0} \mtwo{0}{C}{-C^\top}{0} \vtwo{\vec{E}}{\vec{H}},
\eeq
where $\vec{E}^n=(E^n_0,E^n_1,...,E^n_{N_x})^\top$ and 
$\vec{H}^n=(H^n_{1/2},H^n_{3/2},...,H^n_{N_x+1/2})^\top$ represent the solution vectors at time $n\Delta t$. 
The matrix $C$ represents the finite difference curl operator, and $\mathcal{A}$ is the diagonal matrix 
\begin{equation}
	\mathcal{A} = \operatorname{diag} \left( \frac{1}{1 + 3 \chi E_1^2}, \frac{1}{1 + 3 \chi E_2^2}, ..., \frac{1}{1 + 3 \chi E_{N_x}^2}\right).
\end{equation}

For example, when $N_x=4$ and periodic boundary conditions are applied, $C$ is a circulant matrix:
\begin{equation}
C=\frac{1}{24 \Delta x}\pmat{ 	
	{ 27}&{ -1}&{  0}&{  1}&{-27}\\
	{-27}&{ 27}&{ -1}&{  0}&{  1}\\
	{  1}&{-27}&{ 27}&{ -1}&{  0}\\
	{  0}&{  1}&{-27}&{ 27}&{ -1}\\
	{ -1}&{  0}&{  1}&{-27}&{ 27} }.
\end{equation}
The spectral radius of $C^\top C$ satisfies $\rho(C^\top C)\le \frac{49}{9\Delta x^2}$. Therefore, 
\beq
\left|\left| \mtwo{0}{C}{-C^\top}{0} \right|\right| = \sqrt{\mathcal{\rho}(C^\top C)} \le \frac{7}{3\Delta x}.
\eeq
The corresponding stability condition is given by $\Delta t \le \frac{3\lambda}{7c}\Delta x$, where $c$ is the speed of light and $\lambda$ is the stability limit provided in Table \ref{tab:p4}. 
Specifically, we set $\Delta t = \frac{6\sqrt{2}\Delta x}{7c}$ for RK4 and $\Delta t = \frac{6\sqrt{3}\Delta x}{7c}$ for RK547.

The total energy at $t=n\Delta t$ is given by
\begin{equation}
	\mathcal{E}_n = \frac{1}{2} \left(\mu_0 ||\vec{H}^n||^2 + \varepsilon_0 ||\vec{E}^n||^2 + \frac{3}{2} \varepsilon_0 \chi ||\vec{E}^n||^4 \right). 
\end{equation}

The computational domain is $x \in [-5, 5]$ with periodic boundary conditions. 
The final time is set to $T = 10^{-8}$ to ensure the pulse does not interact with the boundaries. 
The third-order electric susceptibility is set to $\chi = 0.0001$, and the initial condition is the Gaussian pulse defined in Equation (\ref{eq:initial_pulse}). 

The results for RK4 and RK547 are presented in Table \ref{tab:Kerr}. 
Notably, the RK547 method demonstrates a seventh-order energy convergence rate, outperforming RK4 in energy accuracy by two orders of magnitude. Furthermore, because RK547 utilizes a larger time step, its total simulation runtime remains comparable to that of RK4.

	Table \ref{tab:KerrEfficiency} compares the computational performance of RK4 and RK547 methods 
	required to achieve similar target energy accuracy. 
	Although both RK4 and RK547 are fourth-order accurate in the solution, RK547 possesses seventh-order energy accuracy, compared with fifth-order energy accuracy for RK4. 
	These results show that RK547 is more than 12 times faster than RK4 when achieving a target energy accuracy of $|\epsilon_E| < 5 \times 10^{-10}$, while it is about 65 times faster for a target accuracy of $|\epsilon_E| < 5 \times 10^{-13}$.
	These results indicate that the efficiency advantage of RK547 becomes increasingly significant as stricter energy accuracy is required.

\begin{table}																														
		\begin{center}															
			\caption{Energy convergence rates of Maxwell--Kerr system using the classical RK4 and the RK547 methods. } 															
			\label	{tab:Kerr}														
			\begin{tabular}	{lrrccccc}														
				\hline \noalign{\smallskip}															
		Method	&	$N_x$	&	$N_t$	& $\epsilon_2$ 	&	Rate	& $\epsilon_E$	&	Rate	&	Runtime (s) \\
				\hline \noalign{\smallskip}															
				&	1000	&	248		&	2.49E-03	&			&	-1.16E-02	&			&	0.15	\\
				&	2000	&	495		&	1.59E-04	&	3.97	&	-3.75E-04	&	4.95	&	0.20	\\	 		
		RK4		&	4000	&	990		&	9.98E-06	&	4.00	&	-1.18E-05	&	4.99	&	0.44	\\
				&	8000	&	1979	&	6.23E-07	&	4.00	&	-3.69E-07	&	4.99	&	1.17	\\
				&	16000	&	3958	&	3.66E-08	&	4.09	&	-1.15E-08	&	5.00	&	3.59	\\
				\hline \noalign{\smallskip}															
				&	1000	&	202		&	1.35E-03	&			&	-3.33E-04	&			&	0.14	\\
				&	2000	&	404		&	8.18E-05	&	4.05	&	-2.65E-06	&	6.98	&	0.22	\\
		RK547	&	4000	&	808		&	5.06E-06	&	4.01	&	-2.07E-08	&	7.00	&	0.45	\\
				&	8000	&	1616	&	3.14E-07	&	4.01	&	-1.54E-10	&	7.06	&	1.17	\\
				&	16000	&	3231	&	1.85E-08	&	4.09	&	-7.59E-13	&	7.67	&	3.49	\\
				\hline \noalign{\smallskip}															
			\end{tabular}															
		\end{center}																														
\end{table}

\begin{table}																									
		\begin{center}											
			\caption{Computational efficiency comparison of two fourth-order RK methods achieving similar target energy accuracy for Maxwell--Kerr system integrated up to $T=10^{-8}$.}									
			\label{tab:KerrEfficiency}										
			\begin{tabular}{lrrcc}										
				\hline \noalign{\smallskip}									
				method 	&	 $N_x$	&	$N_t$	&$\epsilon_E$	&	CPU time (s) 	\\ \hline\noalign{\smallskip}
				RK4		&	30,000	&	7,420	&	-4.99E-10	&	10.05	\\ \noalign{\smallskip}
				RK547	&	6,800	&	1,374	&	-4.91E-10	&	0.79	\\ \noalign{\smallskip}
				RK4		&	120,000	&	29,678	&	-4.87E-13	&	227.16	\\ \noalign{\smallskip}
				RK547	&	16,800	&	3,393	&	-4.81E-13	&	3.45	\\ \hline
			\end{tabular}										
		\end{center}																							
\end{table}

%%%%%%%%%%%%%%%%%%%%%%%%%%%%%%%%%%%%

\section{Conclusion}
\label{sec:conc}
Through the study of energy convergence rates of explicit Runge--Kutta (RK) methods for skew-symmetric linear autonomous systems, we have developed a class of energy-superconvergent RK methods in which the orders of energy accuracy exceed the number of stages. For an $s$-stage RK method of an even order $p$, the energy accuracy can reach the order of $2s-p+1$. Generally, higher order energy accuracy leads to a larger stability region and thus a larger step size. Additionally, we have derived the strong stability criteria for several fourth-order methods when $5 \le s\le 7$. 
 The present strong stability result is related to classical strong stability preserving (SSP) theory: SSP methods are designed for general nonlinear systems, whereas our analysis exploits skew-symmetry systems, yielding high-order explicit RK methods with superconvergent energy accuracy.

Performance of methods with energy-accuracy up to the eleventh-order is demonstrated using harmonic oscillator ODEs, nonlocal integro-differential equations for peridynamics, and Maxwell's equations. 
In general, a method with higher-order energy accuracy exhibits better computational efficiency than lower-order ones. 
In particular, for the peridynamic simulations, the eleventh-order energy-accurate method is 2,400 times faster than RK4 
at reaching a similar energy accuracy of $5\times10^{-10}$.

Furthermore, we extend the three-stage energy-superconvergent RK method to 
autonomous nonlinear systems with amplitude-dependent frequencies, 
by deriving fifth-order energy conditions for three-stage, second-order methods, leading to RK325. 
The performance of RK325 is illustrated using several nonlinear examples, 
including Euler's equations for rigid body dynamics, the nonlinear Schr\"odinger equation, the KdV equation, Burgers' equation, and the Landau--Lifshitz equation. 
Notably, for Burgers' equation and the Landau--Lifshitz equation, RK325 outperforms RK4 by achieving a higher order of energy accuracy.

Next, we construct seventh-order energy-accurate methods for autonomous nonlinear systems with cubic nonlinearities.
Specifically, we obtain two four-stage, second-order methods (RK427a and RK427b) and one five-stage fourth-order method (RK547), all reaching seventh-order energy accuracy.
Additional simulations on the nonlinear Maxwell--Kerr system illustrate that the RK547 method is more than 65 times faster than RK4 at reaching a similar energy accuracy of $5\times10^{-13}$.

In this work, the coefficients for RK427 and RK547 are computed as numerical solutions to a large nonlinear system, tailored to autonomous nonlinear systems with amplitude-dependent frequencies and cubic nonlinearities. The proposed framework further establishes a foundation for deriving exact coefficients for more general nonlinear problems via B-series, facilitates the development of backward error analyses for these energy-superconvergent explicit RK methods, and supports systematic comparisons with symplectic and structure-preserving methods.

%\paragraph{Paragraph headings} Use paragraph headings as needed.

%\begin{acknowledgements}
%If you'd like to thank anyone, place your comments here
%and remove the percent signs.
%\end{acknowledgements}

%\section*{Data availability}
%The datasets generated during and/or analyzed during the current study are available from the corresponding author upon reasonable request.
%Data will be made available on reasonable request.

% Authors must disclose all relationships or interests that 
% could have direct or potential influence or impart bias on 
% the work: 

%\section*{Declarations}
%\section*{Conflict of interest} 
%The authors declare that they have no conflict of interest.

\section*{Appendix A: Order conditions for RK427}
\label{appendixA}
The following seven equations are the order conditions for RK427; the first two equations guarantee second-order accuracy for the solution, while the remainder ensure seventh-order energy accuracy.
\begin{align}
	&b_1 + b_2 + b_3 + b_4 = 1, \tag{A1} \\
	&b_2 c_2 + b_3 c_3 + b_4 c_4 = \frac{1}{2},  \tag{A2} \\
	&(b_3 a_{32} c_2 + b_4 a_{42} c_2 + b_4 a_{43} c_3) - b_4 a_{43} a_{32} c_2 = \frac{1}{8}, \tag{A3} \\
	&(b_2 c_2^2 + b_3 c_3^2 + b_4 c_4^2) - (b_2 c_2^3 + b_3 c_3^3 + b_4 c_4^3) \nonumber \\
	&\quad + 2(b_3 a_{32} c_2 c_3 + b_4 a_{42} c_2 c_4 + b_4 a_{43} c_3 c_4) \nonumber \\
	&\quad - (b_3 a_{32} c_2^2 + b_4 a_{42} c_2^2 + b_4 a_{43} c_3^2) = \frac{1}{4}, \tag{A4} \\
	&(b_3 a_{32} c_2 + b_4 a_{42} c_2 + b_4 a_{43} c_3)^2 - b_4 a_{43} a_{32} c_2 = 0, \tag{A5} \\	
	&(b_2 c_2^2 + b_3 c_3^2 + b_4 c_4^2)\left( \frac{3}{4} - 2 b_4 a_{43} a_{32} c_2 \right) 
		 - 2(b_3 a_{32} c_2^3 + b_4 a_{42} c_2^3 + a_{43} b_4 c_3^3) \nonumber \\
	&\quad + 6(b_3 a_{32}^2 c_2^2 + b_4 a_{42}^2 c_2^2 + b_4 a_{43}^2 c_3^2) 
		 - 2(b_3 a_{32} c_2 c_3^2 + b_4 a_{42} c_2 c_4^2 + b_4 a_{43} c_3 c_4^2) \nonumber \\
	&\quad - 2(b_3 a_{32}^2 c_2^2 c_3 + b_4 a_{42}^2 c_2^2 c_4 + b_4 a_{43}^2 c_3^2 c_4)
		 - 2 b_4 a_{43} a_{32} c_2 (3 a_{32} c_2 + 2 a_{42} c_2 + 2 a_{43} c_3) \nonumber \\
	&\quad + b_4 a_{43} a_{32} c_2 (2 c_2^2 + 2 c_3^2 + 6 c_4^2) 
		 + 2 b_4 a_{43} a_{32} c_2 (2 c_3 - 2 c_4 - c_2 + 1) \nonumber \\
	&\quad + 4 a_{42} a_{43} b_4 c_3 c_2 (3 - c_4) = \frac{1}{4}, \tag{A6} \\	
	&6(b_3 a_{32}^2 c_2^2 + b_4 a_{42}^2 c_2^2 + b_4 a_{43}^2 c_3^2) 
		 + 4(b_3 a_{32} c_2^2 c_3 + b_4 a_{42} c_2^2 c_4 + b_4 a_{43} c_3^2 c_4) \nonumber \\
	&\quad - 2(b_3 a_{32} c_2 c_3^2 + b_4 a_{42} c_2 c_4^2 + b_4 a_{43} c_3 c_4^2) 
	 - 6(b_3 a_{32} c_2^3 + b_4 a_{42} c_2^3 + b_4 a_{43} c_3^3) \nonumber \\
	&\quad - 6(b_3 a_{32} c_2 + b_4 a_{42} c_2 + b_4 a_{43} c_3)(b_2 c_2^2 + b_4 c_4^2 + b_3 c_3^2) \nonumber \\
	&\quad + (b_2 c_2^2 + b_3 c_3^2 + b_4 c_4^2)^2 + (b_2 c_2^3 + b_3 c_3^3 + b_4 c_4^3) \nonumber \\
	&\quad + b_3 a_{32} c_2^2 + b_4 a_{43} c_3^2 + b_4 a_{42} c_2^2 \nonumber \\
	&\quad - 4(b_3^2 a_{32} c_2 c_3^2 + b_4^2 a_{42} c_2 c_4^2 + b_4^2 a_{43} c_3 c_4^2) \nonumber \\
	&\quad + 4(b_3 a_{32}^2 c_2^3 + b_4 a_{42}^2 c_2^3 + b_4 a_{43}^2 c_3^3) \nonumber \\
	&\quad + 4(b_3 a_{32} c_2^3 c_3 + b_4 a_{42} c_2^3 c_4 + b_4 a_{43} c_3^3 c_4) \nonumber \\
	&\quad - 6(b_3 a_{32} c_2^2 c_3^2 + b_4 a_{42} c_2^2 c_4^2 + b_4 a_{43} c_3^2 c_4^2) \nonumber \\
	&\quad - 2(b_3 a_{32}^2 c_2^2 c_3 + b_4 a_{42}^2 c_2^2 c_4 + b_4 a_{43} c_3^2 c_4) \nonumber \\
	&\quad - 4(b_2 b_3 a_{32} c_2^2 c_3 + b_2 b_4 a_{42} c_2^2 c_4 + b_3 b_4 a_{43} c_3^2 c_4) \nonumber \\
	&\quad - 4(b_4 b_3 a_{32} c_2 c_3 c_4 + b_3 b_4 a_{42} c_2 c_3 c_4 + b_2 b_4 a_{43} c_2 c_3 c_4) \nonumber \\
	&\quad + 4 a_{42} b_4 a_{43} c_2 c_3 (c_2 + c_3 - c_4) \nonumber \\
	&\quad + 2 b_4 a_{43} a_{32} c_2 (3 c_2^2 + c_3^2 + 9 c_4^2 + 2 c_2 c_4 - 2 c_2 c_3 - 4 c_3 c_4) \nonumber \\
	&\quad - 6 b_4 a_{43} a_{32} c_2 (a_{32} c_2 + 2 a_{42} c_2 + 2 a_{43} c_3) \nonumber \\
	&\quad + 6 b_4 a_{43} a_{32} c_2 (2 c_3 - c_2 - 2 c_4 + 1) + 12 a_{42} a_{43} b_4 c_2 c_3 = 0. \tag{A7}
\end{align}

\section*{Appendix B: Order conditions for RK547}
\label{appendixB}
The following eleven equations are the order conditions for RK547.  
\begin{align}
	& b_1 + b_2 + b_3 + b_4 + b_5 = 1, \tag{B1} \\
	& b_2c_2 + b_3c_3 + b_4c_4 + b_5c_5 = \frac{1}{2}, \tag{B2} \\
	& b_2c_2^2 + b_3c_3^2 + b_4c_4^2 + b_5c_5^2 = \frac{1}{3}, \tag{B3} \\
	& b_3a_{32}c_2 + b_4a_{42}c_2 + b_4a_{43}c_3 + b_5a_{52}c_2 + b_5a_{53}c_3 + b_5a_{54}c_4 = \frac{1}{6}, \tag{B4} \\
	& b_2c_2^3 + b_3c_3^3 + b_4c_4^3 + b_5c_5^3 = \frac{1}{4}, \tag{B5} \\
	& b_3c_3a_{32}c_2 + b_4c_4a_{42}c_2 + b_4c_4a_{43}c_3 + b_5c_5a_{52}c_2 + b_5c_5a_{53}c_3 + b_5c_5a_{54}c_4 = \frac{1}{8}, \tag{B6} \\
	& b_3a_{32}c_2^2 + b_4a_{42}c_2^2 + b_4a_{43}c_3^2 + b_5a_{52}c_2^2 + b_5a_{53}c_3^2 + b_5a_{54}c_4^2 = \frac{1}{12}, \tag{B7} \\
	& b_4a_{43}a_{32}c_2 + b_5a_{53}a_{32}c_2 + b_5a_{54}a_{42}c_2 + b_5a_{54}a_{43}c_3 = \frac{1}{24}, \tag{B8} \\
	& a_{32}a_{43}a_{54}b_5c_2  = \frac{1}{144}, \tag{B9}
\end{align}
\begin{align}	
	& \frac{c_3}{6} - \frac{c_2}{2} - \frac{c_4}{3} + \frac{a_{32}c_2}{2} - \frac{a_{42}c_2}{3} - \frac{a_{43}c_3}{3} + \frac{c_2c_3}{3} + \frac{c_2c_4}{6} - \frac{c_3c_4}{3} \nonumber \\
	& + \frac{2a_{32}c_2^2}{3} - \frac{2b_3c_3^2}{3} + b_3c_3^4 - \frac{2b_4c_4^2}{3} - b_3c_3^5 + b_4c_4^4 - \frac{2b_5c_5^2}{3} - b_4c_4^5 \nonumber \\
	& + b_5c_5^4 - b_5c_5^5 - c_2c_3^2 + \frac{2c_2^2c_3}{3} - \frac{c_2^2}{4} + \frac{c_2^3}{3} - \frac{c_3^2}{3} - \frac{c_2^4}{2} + \frac{2c_3^3}{3} + \frac{c_4^2}{2} \nonumber \\
	& + b_3^2c_3^4 + b_4^2c_4^4 + b_5^2c_5^4 + 4a_{42}^2b_4c_2^2 + 4a_{42}^2b_4c_2^3 + 4a_{43}^2b_4c_3^2 + 4a_{43}^2b_4c_3^3 \nonumber \\
	& + 4a_{52}^2b_5c_2^2 + 4a_{52}^2b_5c_2^3 + 4a_{53}^2b_5c_3^2 + 4a_{53}^2b_5c_3^3 + 4a_{54}^2b_5c_4^2 + 4a_{54}^2b_5c_4^3 \nonumber \\
	& - 2b_3^2c_2c_3^3 - 2b_4^2c_2c_4^3 - 2b_5^2c_2c_5^3 - \frac{2a_{32}c_2c_3}{3} + \frac{2b_3c_2c_3}{3} + \frac{2b_4c_2c_4}{3} \nonumber \\
	& + \frac{2b_5c_2c_5}{3} + b_3^2c_2^2c_3^2 + b_4^2c_2^2c_4^2 + b_5^2c_2^2c_5^2 - 5a_{43}b_4c_3^4 - 5a_{53}b_5c_3^4 \nonumber \\
	& - 4a_{54}b_5c_4^3 - a_{54}b_5c_4^4 + b_3c_2c_3^2 - b_3c_2^2c_3 - b_3c_2^3c_3 + b_3c_2^4c_3 + b_4c_2c_4^2 - b_4c_2^2c_4 \nonumber \\
	& - b_4c_2^3c_4 + b_4c_2^4c_4 + b_5c_2c_5^2 - b_5c_2^2c_5 - b_5c_2^3c_5 + b_5c_2^4c_5 - 4a_{32}a_{42}b_4c_2^2 \nonumber \\
	& - 4a_{32}a_{42}b_4c_2^3 - 4a_{32}a_{52}b_5c_2^2 - 4a_{32}a_{52}b_5c_2^3 - 8a_{43}a_{54}b_5c_3^2 - 4a_{43}a_{54}b_5c_3^3 \nonumber \\
	& + 4a_{42}b_4c_2c_3^2 - 4a_{42}b_4c_2^2c_3 - 4a_{42}b_4c_2c_3^3 - 4a_{42}b_4c_2c_4^2 + 4a_{42}b_4c_2^2c_4 \nonumber \\
	& - 4a_{42}b_4c_2^3c_3 - 4a_{43}b_4c_2c_3^2 + 4a_{43}b_4c_2^2c_3 + 4a_{42}b_4c_2c_4^3 + 4a_{42}b_4c_2^3c_4 \nonumber %\\
\end{align}
\begin{align}	
	& + 6a_{43}b_4c_2c_3^3 + a_{43}b_4c_2^3c_3 - 4a_{43}b_4c_3c_4^2 + 4a_{43}b_4c_3^2c_4 + 4a_{43}b_4c_3c_4^3 \nonumber \\
	& + 4a_{43}b_4c_3^3c_4 + 4a_{52}b_5c_2c_3^2 - 4a_{52}b_5c_2^2c_3 - 4a_{52}b_5c_2c_3^3 - 4a_{52}b_5c_2^3c_3 \nonumber \\
	& - 4a_{53}b_5c_2c_3^2 + 4a_{53}b_5c_2^2c_3 - 4a_{52}b_5c_2c_5^2 + 4a_{52}b_5c_2^2c_5 + 6a_{53}b_5c_2c_3^3 \nonumber \\
	& + a_{53}b_5c_2^3c_3 + 4a_{52}b_5c_2c_5^3 + 4a_{52}b_5c_2^3c_5 + 4a_{54}b_5c_2^2c_4 - 4a_{53}b_5c_3c_5^2 \nonumber \\
	& + 4a_{53}b_5c_3^2c_5 + a_{54}b_5c_2^3c_4 + 4a_{54}b_5c_3^2c_4 + 4a_{53}b_5c_3c_5^3 + 4a_{53}b_5c_3^3c_5 \nonumber \\
	& - 4a_{54}b_5c_3^3c_4 - 4a_{54}b_5c_4c_5^2 + 4a_{54}b_5c_4^2c_5 + 4a_{54}b_5c_4c_5^3 + 4a_{54}b_5c_4^3c_5 \nonumber \\
	& + 4a_{42}^2a_{54}b_5c_2^2 + 4a_{43}^2a_{54}b_5c_3^2 + 6a_{42}b_4c_2^2c_3^2 - 6a_{42}b_4c_2^2c_4^2 - 4a_{43}b_4c_2^2c_3^2 \nonumber \\
	& - 4a_{42}^2b_4c_2^2c_4 - 6a_{43}b_4c_3^2c_4^2 - 4a_{43}^2b_4c_3^2c_4 + 6a_{52}b_5c_2^2c_3^2 - 4a_{53}b_5c_2^2c_3^2 \nonumber \\
	& - 6a_{52}b_5c_2^2c_5^2 - 4a_{52}^2b_5c_2^2c_5 - 6a_{53}b_5c_3^2c_5^2 - 4a_{53}^2b_5c_3^2c_5 - 6a_{54}b_5c_4^2c_5^2 \nonumber \\
	& - 4a_{54}^2b_5c_4^2c_5 + 2b_3b_4c_3^2c_4^2 + 2b_3b_5c_3^2c_5^2 + 2b_4b_5c_4^2c_5^2 + 8a_{32}a_{43}a_{54}b_5c_2 \nonumber \\
	& + 8a_{42}a_{43}b_4c_2c_3 + 8a_{32}a_{53}b_5c_2c_4 - 8a_{32}a_{53}b_5c_2c_5 - 4a_{32}a_{54}b_5c_2c_4 \nonumber \\
	& - 4a_{42}a_{54}b_5c_2c_3 + 16a_{42}a_{54}b_5c_2c_4 + 4a_{43}a_{54}b_5c_2c_3 - 8a_{42}a_{54}b_5c_2c_5 \nonumber \\
	& + 16a_{43}a_{54}b_5c_3c_4 - 8a_{43}a_{54}b_5c_3c_5 + 8a_{52}a_{53}b_5c_2c_3 + 8a_{52}a_{54}b_5c_2c_4 \nonumber \\
	& + 8a_{53}a_{54}b_5c_3c_4 - 4a_{54}b_5c_2c_3c_4 + 8a_{32}a_{42}a_{53}b_5c_2^2 + 4a_{32}a_{42}a_{54}b_5c_2^2 \nonumber \\
	& + 4a_{32}a_{43}a_{54}b_5c_2^2 - 8a_{32}a_{52}a_{53}b_5c_2^2 - 8a_{42}a_{52}a_{54}b_5c_2^2 - 8a_{43}a_{53}a_{54}b_5c_3^2 \nonumber \\
	& + 4a_{32}a_{42}b_4c_2^2c_3 + 4a_{32}a_{52}b_5c_2^2c_3 + 4a_{42}a_{43}b_4c_2c_3^2 + 4a_{42}a_{43}b_4c_2^2c_3 \nonumber \\
	& - 8a_{32}a_{53}^2b_5c_2c_3 - 12a_{32}a_{53}b_5c_2c_4^2 - 4a_{32}a_{53}b_5c_2^2c_4 + 12a_{32}a_{53}b_5c_2c_5^2 \nonumber \\
	& + 4a_{32}a_{53}b_5c_2^2c_5 - 4a_{32}a_{54}b_5c_2^2c_4 - 8a_{42}a_{54}b_5c_2c_3^2 + 8a_{42}a_{54}b_5c_2^2c_3 \nonumber \\
	& - 8a_{42}a_{54}b_5c_2c_4^2 - 8a_{42}a_{54}b_5c_2^2c_4 - 8a_{42}a_{54}^2b_5c_2c_4 + 8a_{43}a_{54}b_5c_2c_3^2 \nonumber \\
	& - 4a_{43}a_{54}b_5c_2^2c_3 + 12a_{42}a_{54}b_5c_2c_5^2 + 4a_{42}a_{54}b_5c_2^2c_5 - 8a_{43}a_{54}b_5c_3c_4^2 \nonumber \\
	& + 4a_{43}a_{54}b_5c_3^2c_4 - 8a_{43}a_{54}^2b_5c_3c_4 + 12a_{43}a_{54}b_5c_3c_5^2 + 4a_{43}a_{54}b_5c_3^2c_5 \nonumber \\
	& + 4a_{52}a_{53}b_5c_2c_3^2 + 4a_{52}a_{53}b_5c_2^2c_3 + 4a_{52}a_{54}b_5c_2c_4^2 + 4a_{52}a_{54}b_5c_2^2c_4 \nonumber \\
	& + 4a_{53}a_{54}b_5c_3c_4^2 + 4a_{53}a_{54}b_5c_3^2c_4 + 6a_{54}b_5c_2c_3^2c_4 - 4a_{54}b_5c_2^2c_3c_4 \nonumber \\
	& - 2b_3b_4c_2c_3c_4^2 - 2b_3b_4c_2c_3^2c_4 + 2b_3b_4c_2^2c_3c_4 - 2b_3b_5c_2c_3c_5^2 - 2b_3b_5c_2c_3^2c_5 \nonumber \\
	& + 2b_3b_5c_2^2c_3c_5 - 2b_4b_5c_2c_4c_5^2 - 2b_4b_5c_2c_4^2c_5 + 2b_4b_5c_2^2c_4c_5 + 8a_{32}a_{43}a_{53}b_5c_2c_3 \nonumber \\
	& - 4a_{32}a_{43}a_{54}b_5c_2c_3 + 8a_{32}a_{43}a_{54}b_5c_2c_4 - 8a_{32}a_{43}a_{54}b_5c_2c_5 + 8a_{42}a_{43}a_{54}b_5c_2c_3 \nonumber \\
	& - 8a_{32}a_{53}a_{54}b_5c_2c_4 - 8a_{42}a_{53}a_{54}b_5c_2c_3 - 8a_{43}a_{52}a_{54}b_5c_2c_3 - 8a_{42}a_{43}b_4c_2c_3c_4 \nonumber \\
	& + 8a_{32}a_{53}b_5c_2c_3c_4 - 8a_{32}a_{53}b_5c_2c_3c_5 + 4a_{32}a_{54}b_5c_2c_3c_4 + 8a_{42}a_{54}b_5c_2c_3c_4 \nonumber \\
	& - 4a_{43}a_{54}b_5c_2c_3c_4 - 8a_{42}a_{54}b_5c_2c_4c_5 - 8a_{43}a_{54}b_5c_3c_4c_5 - 8a_{52}a_{53}b_5c_2c_3c_5 \nonumber \\
	& - 8a_{52}a_{54}b_5c_2c_4c_5 - 8a_{53}a_{54}b_5c_3c_4c_5 + \frac{1}{9} = 0, \tag{B10} 
\end{align}
\begin{align}	
	& \frac{3a_{32}c_2}{4} - \frac{5c_3}{12} - \frac{c_4}{6} - \frac{c_2}{12} - \frac{a_{42}c_2}{6} - \frac{a_{43}c_3}{6} - \frac{b_3c_3^2}{3} + b_3c_3^3 - \frac{b_4c_4^2}{3} \nonumber \\
	& + b_4c_4^3 - \frac{b_5c_5^2}{3} + b_5c_5^3 + \frac{c_2^2}{4} - \frac{c_3^2}{6} + \frac{c_4^2}{4} + 6a_{42}^2b_4c_2^2 + 6a_{43}^2b_4c_3^2 + 6a_{52}^2b_5c_2^2 \nonumber \\
	& + 6a_{53}^2b_5c_3^2 + 6a_{54}^2b_5c_4^2 - \frac{a_{32}c_2c_3}{3} + \frac{b_3c_2c_3}{3} + \frac{b_4c_2c_4}{3} + \frac{b_5c_2c_5}{3} + 3a_{43}b_4c_3^2 \nonumber \\
	& + 3a_{53}b_5c_3^2 + a_{54}b_5c_4^2 - 2a_{54}b_5c_4^3 - b_3c_2^2c_3 - b_4c_2^2c_4 - b_5c_2^2c_5 - 6a_{32}a_{42}b_4c_2^2 \nonumber \\
	& - 6a_{32}a_{52}b_5c_2^2 - 2a_{43}a_{54}b_5c_3^3 + 2a_{42}b_4c_2c_3^2 - 2a_{42}b_4c_2c_4^2 + 2a_{43}b_4c_2^2c_3 \nonumber \\
	& - 2a_{43}b_4c_3c_4^2 + 2a_{52}b_5c_2c_3^2 + 2a_{53}b_5c_2^2c_3 - 2a_{52}b_5c_2c_5^2 + 2a_{54}b_5c_2^2c_4 \nonumber \\
	& - 2a_{53}b_5c_3c_5^2 + 2a_{54}b_5c_3^2c_4 - 2a_{54}b_5c_4c_5^2 - 2a_{42}^2a_{54}b_5c_2^2 - 2a_{43}^2a_{54}b_5c_3^2 \nonumber \\
	& - 2a_{42}^2b_4c_2^2c_4 - 2a_{43}^2b_4c_3^2c_4 - 2a_{52}^2b_5c_2^2c_5 - 2a_{53}^2b_5c_3^2c_5 - 2a_{54}^2b_5c_4^2c_5 \nonumber \\
	& + 2a_{42}b_4c_2c_3 - 2a_{42}b_4c_2c_4 - a_{43}b_4c_2c_3 - 2a_{43}b_4c_3c_4 + 2a_{52}b_5c_2c_3 \nonumber \\
	& - a_{53}b_5c_2c_3 - 2a_{52}b_5c_2c_5 - a_{54}b_5c_2c_4 - 2a_{53}b_5c_3c_5 + 2a_{54}b_5c_3c_4 \nonumber \\
	& - 2a_{54}b_5c_4c_5 + 8a_{32}a_{43}a_{54}b_5c_2 + 12a_{42}a_{43}b_4c_2c_3 + 4a_{32}a_{53}b_5c_2c_4 - 4a_{32}a_{53}b_5c_2c_5 \nonumber \\
	& - 6a_{32}a_{54}b_5c_2c_4 + 2a_{42}a_{54}b_5c_2c_3 + 8a_{42}a_{54}b_5c_2c_4 + 2a_{43}a_{54}b_5c_2c_3 \nonumber \\
	& - 4a_{42}a_{54}b_5c_2c_5 + 8a_{43}a_{54}b_5c_3c_4 - 4a_{43}a_{54}b_5c_3c_5 + 12a_{52}a_{53}b_5c_2c_3 \nonumber \\
	& + 12a_{52}a_{54}b_5c_2c_4 + 12a_{53}a_{54}b_5c_3c_4 + 4a_{32}a_{42}a_{53}b_5c_2^2 + 6a_{32}a_{42}a_{54}b_5c_2^2 \nonumber \\
	& + 2a_{32}a_{43}a_{54}b_5c_2^2 - 4a_{32}a_{52}a_{53}b_5c_2^2 - 4a_{42}a_{52}a_{54}b_5c_2^2 - 4a_{43}a_{53}a_{54}b_5c_3^2 \nonumber \\
	& + 2a_{32}a_{42}b_4c_2^2c_3 + 2a_{32}a_{52}b_5c_2^2c_3 - 4a_{32}a_{53}^2b_5c_2c_3 - 6a_{32}a_{53}b_5c_2c_4^2 \nonumber \\
	& + 6a_{32}a_{53}b_5c_2c_5^2 - 4a_{42}a_{54}b_5c_2c_3^2 - 4a_{42}a_{54}b_5c_2c_4^2 - 4a_{42}a_{54}^2b_5c_2c_4 \nonumber \\
	& - 2a_{43}a_{54}b_5c_2^2c_3 + 6a_{42}a_{54}b_5c_2c_5^2 - 4a_{43}a_{54}b_5c_3c_4^2 - 4a_{43}a_{54}^2b_5c_3c_4 \nonumber \\
	& + 6a_{43}a_{54}b_5c_3c_5^2 + 4a_{32}a_{43}a_{53}b_5c_2c_3 + 2a_{32}a_{43}a_{54}b_5c_2c_3 + 4a_{32}a_{43}a_{54}b_5c_2c_4 \nonumber \\
	& - 4a_{32}a_{43}a_{54}b_5c_2c_5 - 4a_{42}a_{43}a_{54}b_5c_2c_3 - 4a_{32}a_{53}a_{54}b_5c_2c_4 - 4a_{42}a_{53}a_{54}b_5c_2c_3 \nonumber \\
	& - 4a_{43}a_{52}a_{54}b_5c_2c_3 - 4a_{42}a_{43}b_4c_2c_3c_4 + 2a_{32}a_{54}b_5c_2c_3c_4 - 4a_{52}a_{53}b_5c_2c_3c_5 \nonumber \\
	& - 4a_{52}a_{54}b_5c_2c_4c_5 - 4a_{53}a_{54}b_5c_3c_4c_5 + \frac{1}{36} = 0. \tag{B11}
\end{align}

% BibTeX users please use one of
\bibliographystyle{unsrt}      % basic style, author-year citations
\bibliography{refs}   % name your BibTeX data base

\end{document}